\theoremstyle{plain}
\newtheorem{theorem}{Theorem}[section]
\newtheorem{lemma}[theorem]{Lemma}
\newtheorem{proposition}[theorem]{Proposition}
\newtheorem{prop}[theorem]{Proposition}
\newtheorem{definition}[theorem]{Definition}
\newtheorem{corollary}[theorem]{Corollary}
\newtheorem{question}[theorem]{Question}
\newcommand{\Z}{\mathbb{Z}}
\newcommand{\newword}{\textbf}
\DeclareMathOperator{\cs}{C}
\DeclareMathOperator{\unfold}{unfold}
\DeclareMathOperator{\fold}{fold}
\begin{document}

\title{Demazure product of permutations and hopping}

\author{Tina Li}
\address{Clements High School, Sugar Land, TX}

\author{Suho Oh}
\address{Department of Mathematics, Texas State University, San Marcos, TX}
\email{suhooh@txstate.edu}

\author{Edward Richmond}
\address{Department of Mathematics, Oklahoma State University, Stillwater, OK}
\email{edward.richmond@okstate.edu}

\author{Grace Yan}
\address{Morgantown High School, Morgantown, WV}

\author{Kimberley You}
\address{Pioneer High School, Mission, TX}

\begin{abstract}
The Demazure product (also goes by the name of 0-Hecke product or the greedy product) is an associative operation on Coxeter groups with interesting properties and important applications. In this note, we study permutations and present an efficient way to compute the Demazure product of two permutations starting from their usual product and then applying a new operator we call a hopping operator.  We also give an analogous result for the group of signed permutations.
\end{abstract}

\maketitle

\section{Introduction}\label{section:intro}

Coxeter groups play an important role in the representation theory of Lie groups and the geometry of associated flag varieties and Schubert varieties. There is an interesting associative operation on Coxeter groups called the \newword{Demazure product} \cite{Dem94} (also called the $0$-\newword{Hecke product} or the \newword{greedy product}).  In this paper, we study the Demazure product for two classes of Coxeter groups: permutations and signed permutations.  Our main result is a computationally efficient algorithm to compute this product using a new operator called a hopping operator.

Let $W$ be a Coxeter group with simple generating set $S$.  Specifically, $W$ is generated by $S$ with relations of the the form
\begin{equation}\label{eqn:Coxeter_braid}
(st)^{m_{st}}=id, \quad s,t\in S
\end{equation}
for some values $m_{st}\in \Z_{>0}\cup\{\infty\}$ where $m_{st}=1$ if and only if $s=t$.
Coxeter groups come equipped with a natural length function $\ell:W\rightarrow \Z_{\geq 0}$ and a poset structure called the Bruhat order (denoted by $\leq$) that respects length.  For more details on the basic properties of Coxeter groups, see \cite{Bjorner-Brenti05}.  The \newword{Coxeter monoid} structure (also called the $0$-\newword{Iwahari-Hecke monoid}) on $W$ is defined to be the monoid generated by $S$ with a product $\star$ satisfying the Coxeter braid relations in Equation \eqref{eqn:Coxeter_braid} for $s\neq t$ along with the relation $s\star s=s$ for all $s\in S$ (this new relation replaces $s^2=id$ in the usual product).  This monoid structure was first studied by Norton in \cite{No79} in the context of Hecke algebras.  It is well known that, as sets, $W=\langle S,\star\rangle$.  We say an expression $w=s_1\cdots s_k$ is reduced if $\ell(w)=k$.  In other words, $w$ cannot be expressed with fewer than $k$ generators in $S$.  The next lemma records some basic facts about the Coxeter monoid.
\begin{lemma}\label{lemma:moniod_properties}
Let $W$ be a Coxeter group with simple generating set $S$.  Then the following are true:
\begin{enumerate}
    \item Let $(s_1,\ldots,s_k)$ be a sequence of generators in $S$.  Then
    $$s_1\cdots s_k\leq s_1\star\cdots \star s_k$$ with equality if and only if $(s_1,\ldots,s_k)$ is a reduced expression.
    \item For any $s\in S$ and $w\in W$,
$$s\star w=\begin{cases} w & \text{if}\quad \ell(sw)< \ell(w)\\ sw & \text{if}\quad \ell(sw)>\ell(w).\end{cases}$$
\end{enumerate}
\end{lemma}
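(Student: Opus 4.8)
The plan is to establish part (2) first, since it is the atomic rule governing multiplication by a single generator, and then to bootstrap part (1) from it by induction on $k$. Throughout I would use three standard facts about Coxeter groups (see \cite{Bjorner-Brenti05}): for $s\in S$ and $w\in W$ exactly one of $\ell(sw)=\ell(w)+1$ or $\ell(sw)=\ell(w)-1$ holds, so $w$ and $sw$ are always Bruhat-comparable with the longer element on top; the characterization that $\ell(sw)<\ell(w)$ if and only if $w$ has a reduced expression beginning with $s$; and the well-known identification $W=\langle S,\star\rangle$ of \cite{No79, Bjorner-Brenti05}, under which a reduced word takes the same value in the monoid and in the group. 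I will take as input from the cited literature the resulting fact that the $\star$-product of a reduced expression equals the corresponding ordinary product.

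For part (2), fix a reduced expression $w=t_1\cdots t_m$ and split on the sign of $\ell(sw)-\ell(w)$. If $\ell(sw)>\ell(w)$, then $(s,t_1,\ldots,t_m)$ is a reduced word for $sw$, so the reduced-word fact gives $s\star t_1\star\cdots\star t_m=sw$; since also $w=t_1\star\cdots\star t_m$, associativity yields $s\star w=sw$. If $\ell(sw)<\ell(w)$, then $w$ has a reduced expression beginning with $s$, say $w=sw'$ with this expression reduced, whence $w=s\star w'$ and
$$s\star w=s\star s\star w'=(s\star s)\star w'=s\star w'=w,$$
using only the relation $s\star s=s$ and associativity. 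This gives the two cases of (2).

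For the inequality in part (1), I would induct on $k$. Writing $u=s_2\cdots s_k$ and $v=s_2\star\cdots\star s_k$, the inductive hypothesis gives $u\leq v$. Since $s_1u\leq\max(u,s_1u)=s_1\star u$ and the map $x\mapsto s_1\star x$ is order-preserving for the Bruhat order (a consequence of (2) and the lifting property), the chain
\[
s_1\cdots s_k=s_1u\leq s_1\star u\leq s_1\star v=s_1\star\cdots\star s_k
\]
gives the claimed inequality. For the implication reduced $\Rightarrow$ equality, reducedness of $(s_1,\ldots,s_k)$ forces $(s_2,\ldots,s_k)$ to be reduced, so the inductive hypothesis upgrades to $u=v$; meanwhile $\ell(s_1u)>\ell(u)$, so (2) computes $s_1\star v=s_1\star u=s_1u$, and the ordinary and Demazure products agree.

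The hard part will be the converse, equality $\Rightarrow$ reduced. The natural plan is to track lengths: part (2) shows each left multiplication $x\mapsto s\star x$ either fixes $\ell$ or raises it by one, so that $\ell(s_1\star\cdots\star s_k)\leq k$, with a drop occurring precisely when some prefix fails to be reduced. One then wants to promote this length statement, together with the inequality already proved, to a \emph{strict} inequality $s_1\cdots s_k<s_1\star\cdots\star s_k$ whenever the word is non-reduced. This is the delicate step, and the place I expect to spend the most effort: it requires controlling the length-decreasing steps that can occur in the ordinary product against the non-decreasing behaviour of the Demazure product, so that the two values cannot accidentally coincide. Here the bookkeeping, rather than any single clean identity, carries the argument, and the hypotheses must be handled with care.
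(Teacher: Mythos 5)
Your part (2) and the first half of part (1) are correct: the two cases of (2) argued from a reduced expression for $w$, the induction giving $s_1\cdots s_k\leq s_1\star\cdots\star s_k$ via monotonicity of $x\mapsto s\star x$ (which does indeed follow from (2) together with the lifting property), and the implication reduced $\Rightarrow$ equality are all standard and sound. For reference, the paper itself gives no proof at all; the lemma is recorded as a collection of known facts about the Coxeter monoid, with the content delegated to Norton and to Bj\"orner--Brenti. So the only real question is whether your argument is complete.

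It is not, and the gap you flagged at the end is worse than ``delicate'': it cannot be closed, because the converse direction (equality $\Rightarrow$ reduced) is false as stated. Take any $s\in S$ and the word $(s,s,s)$. Then $s\cdot s\cdot s=s$ in $W$, while $s\star s\star s=s$ in the monoid, so the ordinary product and the Demazure product coincide even though the word has length $3$ and $\ell(s)=1$, i.e.\ the word is not reduced. This is exactly why no amount of bookkeeping of length-raising versus length-preserving steps was going to finish your argument. The correct ``if and only if'' statement, which is what the literature actually proves, concerns length rather than equality of the two products: one always has $\ell(s_1\star\cdots\star s_k)\leq k$, with equality if and only if $(s_1,\ldots,s_k)$ is reduced, and in that case $s_1\star\cdots\star s_k=s_1\cdots s_k$. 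Your own observation already proves this corrected statement: if every step $x\mapsto s_i\star x$ raises length by exactly one, then by (2) every step agrees with ordinary multiplication, so the star product equals the ordinary product and has length $k$; if some step fails to raise length, the total length falls short of $k$. So you should either prove that length version, or keep the paper's formulation but weaken ``if and only if'' to ``if''; as literally written, the equality claim of part (1) is not provable because it is not true.
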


It turns out that there is a very nice interpretation of $w \star u$:


\begin{prop}\cite{He09, Kenny14}\label{prop:interval_product}
For any $w,u\in W$, the Bruhat interval $$[e,w\star u]=\{ab \ |\ a\in [e,w], b\in[e,u]\}.$$
\end{prop}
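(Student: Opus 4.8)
The plan is to induct on the length $\ell(u)$, peeling off one simple reflection from the right of $u$ at each stage. When $u=e$ both sides reduce to $[e,w]$, so suppose $u\neq e$ and choose $s\in S$ with $us<u$; writing $u'=us$ we get a reduced factorization $u=u's$, hence $u'\star s=u's=u$ by the right-handed analogue of Lemma~\ref{lemma:moniod_properties}(2) (obtained from the stated left-handed version through the inverse anti-automorphism $g\mapsto g^{-1}$, which preserves both $\ell$ and the Bruhat order while reversing $\star$). Associativity of $\star$ then gives $w\star u=(w\star u')\star s$. Setting $y:=w\star u'$, the inductive hypothesis supplies $[e,y]=\{ab' : a\le w,\ b'\le u'\}$, and it remains to understand how right multiplication by $s$ transforms this description.

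The one auxiliary fact I would record first is an ``interval-splitting'' identity: for any simple reflection $s$ with $ys>y$,
\[ v\le ys \iff v\le y \ \text{ or }\ vs\le y. \]
This follows at once from the subword characterization of Bruhat order \cite{Bjorner-Brenti05} applied to a reduced word for $y$ with $s$ appended. Together with the right-handed form of Lemma~\ref{lemma:moniod_properties}(2), which says $y\star s$ equals $y$ when $ys<y$ and equals $ys$ when $ys>y$, this identity is the engine driving both inclusions.

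For the inclusion $[e,w\star u]\subseteq\{ab\}$, take $v\le w\star u=y\star s$. If $ys<y$ then $w\star u=y$ and the inductive hypothesis writes $v=ab'$ with $b'\le u'\le u$, done. If $ys>y$ then $w\star u=ys$, and the splitting identity gives either $v\le y$ (finish as before) or $vs\le y$. In the latter case the inductive hypothesis yields $vs=ab'$ with $a\le w,\ b'\le u'$, so $v=a(b's)$; applying the splitting identity to $u=u's>u'$ and using $b'\le u'$ shows $b's\le u$, which exhibits $v$ in the desired form.

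For the reverse inclusion, take $a\le w$ and $b\le u$ and apply the splitting identity to $b\le u's$. If $b\le u'$ then the inductive hypothesis gives $ab\le y\le y\star s=w\star u$, since $y\le y\star s$ always holds. If instead $bs\le u'$, write $z:=a(bs)\le y$ (inductive hypothesis) so that $ab=zs$; when $ys>y$ the splitting identity immediately gives $zs\le ys=w\star u$, because $z\le y$. The hard part will be the remaining subcase $ys<y$, where $w\star u=y$ and I must promote $z\le y$ to $zs\le y$: this is exactly where the lifting property of the Bruhat order \cite{Bjorner-Brenti05} enters, applied with the right descent $s$ of $y$ (treating separately the easy situation $zs<z$ and ruling out $z=y$, which would force $ys>y$). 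Assembling these subcases completes the induction, and I expect the bookkeeping in this last subcase---matching the behavior of $s$ on $z$ against its behavior on $y$---to be the only genuinely delicate point.
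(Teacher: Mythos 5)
Your proof is correct, but there is nothing in the paper to compare it against: Proposition \ref{prop:interval_product} is imported from \cite{He09, Kenny14} without proof, so your argument supplies a self-contained proof of something the paper deliberately leaves to the references. Your route---induction on $\ell(u)$, peeling a right descent $s$ off $u$, using associativity to write $w\star u=(w\star u')\star s$, and pushing the inductive description of $[e,w\star u']$ through right multiplication by $s$---is sound, and the two external facts you lean on are standard and correctly applied: your ``interval-splitting'' identity is a form of Deodhar's Property Z and does follow from the subword property exactly as you say, and the promotion of $z\le y$ to $zs\le y$ when $ys<y$ is the usual lifting property (your parenthetical case analysis, treating $zs<z$ first and then ruling out $z=y$ when $zs>z$, is convoluted but coherent). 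I checked all four branches of your two inclusions; in particular the step from $b'\le u'$ to $b's\le u$ via the splitting identity applied to $u=u's$ is right. One remark: the subcase you flag as ``genuinely delicate'' is not---it follows from your own splitting identity applied to $y=(ys)s$, with no appeal to the lifting property. Indeed, $z\le y$ gives either $zs\le ys<y$ or $z\le ys$, and in the latter case the backward direction of the identity (using $(zs)s=z\le ys$) gives $zs\le y$ directly. So the splitting identity alone drives the entire induction, which makes the argument even more elementary than you anticipated.
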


We give an example of this phenomenon. The poset in Figure~\ref{fig:4231lower} is the Bruhat order of the symmetric group $S_4$ which has three simple generators $s_1,s_2,s_3$. The elements in the lower interval of $s_1s_2s_3s_2s_1$ are colored in red. All the elements in the lower interval $[e,s_1s_2s_3s_2s_1]$ can be written as $a \star b$ where $a \leq s_1s_2s_3$ and $b \leq s_2s_1$. For example, $s_2s_3s_1$ can be written as $s_2s_3 \star s_1$.

\begin{figure}[h]

 \begin{center}

  \vspace{3.7mm}
\begin{tikzpicture}[scale=0.7]
 \node (4321) at (0,10) {$s_1s_2s_3s_1s_2s_1$};

 \node (4312) at (5,8) {$s_2s_3s_2s_1s_2$};
 \node[black!20!red] (4231) at (-5,8) {$s_1s_2s_3s_2s_1$};
 \node (3421) at (0,8) {$s_1s_2s_3s_1s_2$};

 \node[black!20!red] (4132) at (0,6) {$s_2s_3s_2s_1$};
 \node[black!20!red] (4213) at (8,6) {$s_3s_1s_2s_1$};
 \node (3412) at (4,6) {$s_2s_3s_1s_2$};
 \node[black!20!red] (2431) at (-8,6) {$s_1s_2s_3s_2$};
 \node[black!20!red] (3241) at (-4,6) {$s_1s_2s_3s_1$};

 \node[black!20!red] (1432) at (-6,4) {$s_2s_3s_2$};
 \node[black!20!red] (4123) at (2,4) {$s_3s_2s_1$};
 \node[black!20!red] (2413) at (6,4) {$s_3s_1s_2$};
 \node[black!20!red] (3142) at (-2,4) {$s_2s_3s_1$};
 \node[black!20!red] (3214) at (10,4) {$s_1s_2s_1$};
 \node[black!20!red] (2341) at (-10,4) {$s_1s_2s_3$};

 \node[black!20!red] (1423) at (-4,2) {$s_3s_2$};
 \node[black!20!red] (1342) at (-8,2) {$s_2s_3$};
  \node[black!20!red] (2143) at (0,2) {$s_3s_1$};
 \node[black!20!red] (3124) at (4,2) {$s_2s_1$};
 \node[black!20!red] (2314) at (8,2) {$s_1s_2$};

 \node[black!20!red] (1243) at (-5,0) {$s_3$};
 \node[black!20!red] (1324) at (0,0) {$s_2$};
 \node[black!20!red] (2134) at (5,0) {$s_1$};

  \node[black!20!red] (1234) at (0,-2) {$\emptyset$};

\draw (1234) -- (1243);
\draw (1234) -- (1324);
\draw (1234) -- (2134);
\draw (1243) -- (1342);
\draw (1243) -- (1423);
\draw (1243) -- (2143);
\draw (1324) -- (1342);
\draw (1324) -- (1423);
\draw (1324) -- (2314);
\draw (1324) -- (3124);
\draw (2134) -- (2143);
\draw (2134) -- (2314);
\draw (2134) -- (3124);
\draw (1342) -- (1432);
\draw (1342) -- (2341);
\draw (1342) -- (3142);
\draw (1423) -- (1432);
\draw (1423) -- (2413);
\draw (1423) -- (4123);
\draw (2143) -- (2341);
\draw (2143) -- (2413);
\draw (2143) -- (3142);
\draw (2143) -- (4123);
\draw (2314) -- (2341);
\draw (2314) -- (2413);
\draw (2314) -- (3214);
\draw (3124) -- (3142);
\draw (3124) -- (3214);
\draw (3124) -- (4123);

\draw (1432) -- (2431);
\draw (1432) -- (3412);
\draw (1432) -- (4132);

\draw (2341) -- (2431);
\draw (2341) -- (3241);

\draw (2413) -- (2431);
\draw (2413) -- (4213);

\draw (3142) -- (3241);
\draw (3142) -- (4132);
\draw (3142) -- (3412);

\draw (3214) -- (3241);
\draw (3214) -- (3412);
\draw (3214) -- (4213);

\draw (4123) -- (4132);
\draw (4123) -- (4213);

\draw (2431) -- (3421);
\draw (2431) -- (4231);

\draw (3241) -- (3421);
\draw (3241) -- (4231);

\draw (3412) -- (3421);
\draw (3412) -- (4312);

\draw (4132) -- (4231);
\draw (4132) -- (4312);

\draw (4213) -- (4231);
\draw (4213) -- (4312);

\draw (3421) -- (4321);
\draw (4231) -- (4321);
\draw (4312) -- (4321);

\end{tikzpicture}
    \captionsetup{width=1.0\linewidth}
  \captionof{figure}{The Bruhat order of $S_4$ and the lower interval of $s_1s_2s_3s_2s_1$ in red.}
  \label{fig:4231lower}

 \end{center}

\end{figure}
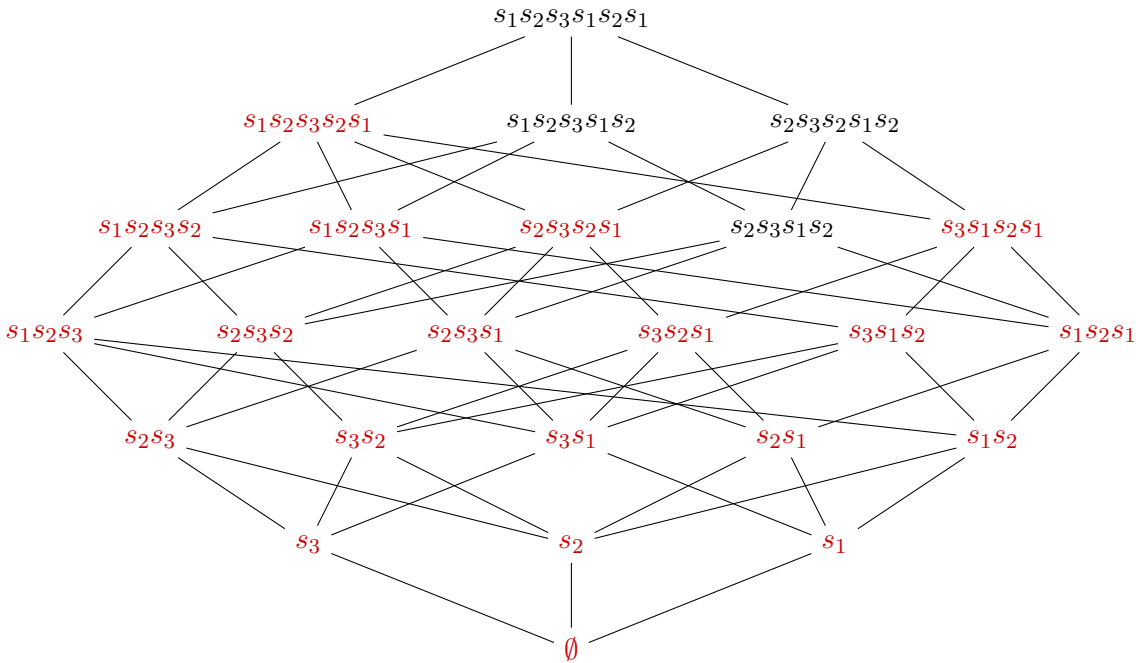

This product has been used and studied in various fields that depend on Coxeter groups \cite{gay:hal-01691266}, \cite{KNUTSON2004161},\cite{Larson19}, \cite{OR}, \cite{Pflueger}, \cite{Richardson-Springer90}.  For example, in Lie theory, the Demazure product naturally arises in the study of BN pairs and reductive groups.  Specifically, the relation on Borel double orbit closures is given by $$\overline{BwBuB}=\overline{B(w\star u) B}.$$ While the product $w\star v$ has been well studied for many years, it is computationally expensive to calculate using reduced expressions of $w$ and $u$.  In this note, we present an efficient method to compute the Demazure product of two permutations in the symmetric group using their one-line notation.  This algorithm starts with the usual product of permutations and a brand new operation we call \newword{hopping} operators.  We state this result in Theorem \ref{thm:main}. In Section \ref{s:typeB}, we prove an analogous result for signed permutations which is stated in Theorem \ref{thm:b_main}.




\section{The hopping operator}

In this section, we focus on the permutation group or symmetric group $S_n$.  These groups are also known as Coxeter groups of type $A$.   The group $S_n$ is a Coxeter group with simple generating set $S=\{s_1,\ldots,s_{n-1}\}$ satisfying the relations $s_i^2=id$ and
\begin{equation}\label{eqn:type_A_braid}
(s_is_j)^2=id\ \text{ if $|i-j|>1$\ and}\ (s_is_{i+1})^3=id.
\end{equation}
The generator $s_i$ corresponds to the simple transposition $(i,i+1)$.  Let $[n]:=\{1,2,\ldots, n\}$ and for $w\in S_n$, let $w=w(1)w(2)\cdots w(n)$ denote the permutation in one-line notation.  We define a new operator on permutations called the \newword{hopping operator}.

\begin{definition}\label{def:hopping_typeA}
For $t \in [n]$ and $L$ an ordered subset of $[n]$ (without repetition) the \newword{hopping operator} $$h_{t,L}:S_n\rightarrow S_n$$ acts on a permutation $w$ to yield the permutation obtained by the following algorithm: Scan to the right (within the one-line notation of $w$) of $t$ and look for the element furthest to the right in $L$ that is greater than $t$. If it exists, swap $t$ and that element, replace $w$ with the resulting permutation, and repeat. The algorithm ends when there are no elements of $L$ within $w$ to the right of $t$.
\end{definition}

For example, take $w = 891726435$. Then $h_{1,[2,3,4,5,6,7,8]}(w) = 897625431$ is obtained by the following process:
$$89\textbf{17}26435 \rightarrow 897\textbf{1}2\textbf{6}435 \rightarrow 89762\textbf{1}43\textbf{5} \rightarrow 897625431.$$
For another example, we have $h_{1,[3,6,5,7,2]}(w) = 892756431$ and is obtained by the following process:
$$89\textbf{1}7\textbf{2}6435 \rightarrow 8927\textbf{1}643\textbf{5} \rightarrow 892756431.$$

For any ordered subset $L\subset [n]$, let $w(L)\subseteq [n]$ denote the ordered list obtained by $w$ acting on the elements of $L$.  While a permutation may not preserve $L$, it can be viewed as an operator on ordered subsets of [n] preserving size.  Hopping operators satisfy the following commuting relation with simple transpositions:

\begin{lemma}
\label{lem:transhop}
Let $w\in S_n$.  For any $i \geq t$ and ordered subset $L\subseteq [n]$, we have $$s_i\cdot h_{t,L}(w) = h_{t,s_i(L)}( s_i\cdot w).$$
\end{lemma}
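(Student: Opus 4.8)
The plan is to prove the identity by isolating the effect of a single hop and then inducting on the total number of hops that $h_{t,L}$ performs. I will read $s_i \cdot w$ as the relabeling that interchanges the two values $i$ and $i+1$ throughout the one-line notation of $w$ (matching the convention under which $s_i(L)$ interchanges $i$ and $i+1$ wherever they appear in the list $L$). The organizing idea is an equivariance statement: the hopping procedure commutes with any relabeling of values that fixes $t$ and carries the set $\{v : v>t\}$ to itself, provided the defining list is relabeled in the same way. The hypothesis $i \geq t$ is what puts $s_i$ (almost) into this class, since then the interchanged labels $i,i+1$ sit weakly above the distinguished value $t$.

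First I would make explicit the only data the algorithm in Definition~\ref{def:hopping_typeA} consults at each step: the current position of the value $t$, and, for each entry lying to its right, whether that entry belongs to $L$, whether it exceeds $t$, and its rank within the order of $L$. The greedy choice depends on nothing else. I would then prove the single-hop claim: if $v$ is the entry selected for the first swap of $h_{t,L}$ on $w$, then running $h_{t,s_i(L)}$ on $s_i \cdot w$ selects $s_i(v)$ and swaps it with $t$ in the corresponding slot, so that after one hop the two permutations again differ by exactly $s_i$. This reduces to checking that relabeling by $s_i$ preserves list-membership, the relation ``$>t$'', and the $L$-rank of every entry that survives the hop.

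The verification splits by how $\{i,i+1\}$ meets $\{t,v\}$. When these sets are disjoint the two transpositions commute, and $s_i(L)$ keeps both the eligibility and the $L$-rank of every remaining entry, so the same position is chosen on both sides. The substantive case is $v\in\{i,i+1\}$, where the selected value is itself moved by $s_i$: here one must see that $s_i(L)$ assigns $s_i(v)$ precisely the $L$-rank formerly carried by $v$, so the greedy rule lands on the matching slot and no competing entry overtakes it. I expect this reconciliation of the $L$-order selection with the label swap to be the main obstacle, together with the endpoint $i=t$, where $s_i$ acts on the distinguished value $t$ itself and the equivariance picture degenerates; that boundary I would treat by a separate direct comparison. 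With the single-hop commutation established, the induction is routine: after the first hop $t$ has advanced to the right in both permutations, the residual configurations still differ by $s_i$, and applying the inductive hypothesis to the now-shorter hopping sequence yields the claim.
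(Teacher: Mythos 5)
Your plan is, at its core, the same argument the paper gives: the paper's two-sentence proof is exactly the equivariance observation you describe --- left multiplication by $s_i$ is the relabeling of the values $i \leftrightarrow i+1$, and applying this relabeling simultaneously to $w$ and to the list $L$ preserves membership in $L$, the relation ``$>t$'', and the $L$-rank of every entry, so both hopping processes perform their swaps in the same positions and the outputs differ by that relabeling. (Your reading of $s_i\cdot w$ as the value swap is correct; it matches the paper's worked example $s_3\cdot 543621 = 534621$.) Your single-hop-plus-induction write-up, including the check that $s_i(L)$ gives $s_i(v)$ the rank formerly held by $v$ when $v\in\{i,i+1\}$, is a correct and more detailed version of this, valid precisely when $s_i$ fixes the value $t$, i.e.\ when $i>t$.

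The genuine gap is the boundary case $i=t$, which you defer to ``a separate direct comparison.'' No such comparison can succeed, because the identity is simply false there. Take $n=2$, $t=i=1$, $L=[2]$, and $w=12$ the identity. Then $h_{1,[2]}(12)=21$, so $s_1\, h_{1,[2]}(12)=12$; on the other side $s_1(L)=[1]$, $s_1\cdot w=21$, and $h_{1,[1]}(21)=21$ since nothing lies to the right of $1$ and no entry of $[1]$ exceeds $1$. So the two sides disagree. When $i=t$ the relabeling moves the distinguished value $t$ itself and can delete from $s_i(L)$ exactly the entries that were $>t$, so equivariance does not merely ``degenerate'' --- it breaks. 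To be fair, this is a defect of the stated hypothesis rather than of your argument alone: the lemma should require $i>t$, the paper's own proof tacitly assumes this (it asserts the position of $t$ is the same in $w$ and $s_i w$ throughout, which forces $t\notin\{i,i+1\}$), and the later applications in Theorem \ref{thm:main} only ever commute $h_{i,\cdot}$ past generators of strictly larger index. So: your proof is complete for $i>t$ and matches the paper's route, but the case $i=t$ that you promise to handle separately cannot be handled, and flagging it as false (rather than as a case to be checked) is the correct resolution.
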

\begin{proof}
The positions that $t$ will be in throughout the hopping of $h_{t,L}\,w$ will be exactly the same as the position of $t$ throughout the hopping of $h_{t,s_i(L)}(s_iw)$. So $h_{t,L}w$ and $h_{t,s_i(L)}(s_iw)$ will be exactly the same except $i$ and $i+1$ flipped, hence the desired result.
\end{proof}

For example, let $w = 514632$. We compare the action of $s_3 h_{1,[2,3,4,5]}$ and $h_{1,[2,4,3,5]}s_3$ on $w$. First, we have
$$ 514632 \xrightarrow[h_{1,[2,3,4,5]}]{} 543621 \xrightarrow[\quad s_3 \quad]{} 534621.$$
On the other hand, we get
$$514632 \xrightarrow[\quad s_3 \quad]{} 513642 \xrightarrow[h_{1,[2,4,3,5]}]{} 534621,$$
yielding the same result as guaranteed by Lemma~\ref{lem:transhop}.


Throughout the paper, we will denote the product of simple transpositions by the symbol
$$\cs_{a,b}:=s_a s_{a+1} \cdots s_{a+b-1}.$$  If $b=0$, then $\cs_{a,0}$ is the identity.  The operator $\cs_{a,b}$ acts on a permutation $w$ by mapping each of $a,a+1,\ldots,a+b$ to $a+1,\ldots,a+b,a$ respectively. In other words, it is a cyclic shift of the elements $a,a+1,\ldots,a+b$ by one. For example, in $S_8$, we have $\cs_{2,4} = s_2s_3s_4s_5$  corresponding to the permutation $13456278$.  From the above lemma we immediately get the following corollary:

\begin{corollary}
\label{cor:shcomm}
Let $w\in S_n$.  For any $a \geq t$ and ordered subset $L\subseteq [n]$, we have $$\cs_{a,b} h_{t,L}(w) = h_{t,\cs_{a,b}(L)} \cs_{a,b}(w).$$
\end{corollary}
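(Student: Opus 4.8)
The plan is to derive Corollary \ref{cor:shcomm} directly from Lemma \ref{lem:transhop} by an induction on the parameter $b$, since $\cs_{a,b}$ is by definition a product of consecutive simple transpositions $s_a s_{a+1}\cdots s_{a+b-1}$. The base case $b=0$ is immediate: $\cs_{a,0}$ is the identity operator, and the claimed identity reads $h_{t,L}(w)=h_{t,L}(w)$. For the inductive step I would peel off the leftmost generator $s_a$, writing $\cs_{a,b}=s_a\,\cs_{a+1,b-1}$, and apply Lemma \ref{lem:transhop} to move $s_a$ past the hopping operator, then invoke the inductive hypothesis on the shorter string $\cs_{a+1,b-1}$.

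The one point requiring care is the hypothesis $i\geq t$ in Lemma \ref{lem:transhop}, which here becomes the requirement that every generator appearing in $\cs_{a,b}$ has index at least $t$. Under the standing assumption $a\geq t$, all indices $a, a+1,\ldots, a+b-1$ are at least $a\geq t$, so Lemma \ref{lem:transhop} applies at each stage; I would make this observation explicit at the start. Concretely, the computation would run
\begin{align*}
\cs_{a,b}\, h_{t,L}(w)
&= s_a\,\cs_{a+1,b-1}\, h_{t,L}(w)\\
&= s_a\, h_{t,\,\cs_{a+1,b-1}(L)}\big(\cs_{a+1,b-1}(w)\big)\\
&= h_{t,\,s_a\cs_{a+1,b-1}(L)}\big(s_a\cs_{a+1,b-1}(w)\big)\\
&= h_{t,\,\cs_{a,b}(L)}\big(\cs_{a,b}(w)\big),
\end{align*}
where the second equality is the inductive hypothesis (valid since the generators of $\cs_{a+1,b-1}$ all have index $\geq a+1>t$) and the third is Lemma \ref{lem:transhop} applied with $i=a\geq t$ to the permutation $\cs_{a+1,b-1}(w)$ and ordered subset $\cs_{a+1,b-1}(L)$.

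I do not anticipate a genuine obstacle here, as the statement is essentially a bookkeeping consequence of the single-transposition commutation already established. The only subtlety worth flagging is that $s_a$ acts on the ordered subset $L$ as an operator on ordered subsets (in the sense defined before the lemma), so I would double-check that the composition $s_a\cs_{a+1,b-1}$ acting on $L$ agrees with $\cs_{a,b}$ acting on $L$ as operators on ordered subsets; this is immediate from associativity of the action, but it is the kind of detail where an off-by-one error in the cyclic shift could creep in. With that verified, the induction closes and the corollary follows.
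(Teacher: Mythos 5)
Your proof is correct and is essentially the paper's own argument: the paper derives Corollary~\ref{cor:shcomm} "immediately" from Lemma~\ref{lem:transhop} by iterating it over the generators $s_a,s_{a+1},\ldots,s_{a+b-1}$ of $\cs_{a,b}$, which is exactly what your induction on $b$ formalizes, including the key observation that all indices involved are at least $a\geq t$ so the lemma applies at each step.
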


The Demazure product with $\cs_{a,b}$ can be described using the usual product on permutations and a hopping operator as follows:

\begin{proposition}
\label{prop:cyclehop}
$(s_i s_{i+1} \cdots s_j) \star v = h_{i,[i+1,\ldots,j+1]} (s_i s_{i+1} \cdots s_j v)$
\end{proposition}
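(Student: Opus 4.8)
The plan is to induct on the number of generators $b := j-i+1$, writing $\cs_{i,b}=s_is_{i+1}\cdots s_j$ so that the claim reads $\cs_{i,b}\star v = h_{i,[i+1,\ldots,j+1]}(\cs_{i,b}v)$. Two preliminary observations streamline everything. First, since each list $[i+1,\ldots,j+1]$ appearing below is increasing and every entry exceeds the value being hopped, the rule of Definition~\ref{def:hopping_typeA} simplifies: at each step the operator swaps the hopped value with the \emph{largest} value lying to its right that belongs to the list. Second, because $s_i\cdots s_j$ is reduced, Lemma~\ref{lemma:moniod_properties}(1) gives $\cs_{i,b}=s_i\star \cs_{i+1,b-1}$, so associativity of $\star$ yields
\[
\cs_{i,b}\star v = s_i\star\bigl(\cs_{i+1,b-1}\star v\bigr).
\]

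For the base case $b=1$ I would prove $s_i\star v = h_{i,[i+1]}(s_iv)$ directly from Lemma~\ref{lemma:moniod_properties}(2): if $i$ precedes $i+1$ in $v$ then $\ell(s_iv)>\ell(v)$, so $s_i\star v=s_iv$, and in $s_iv$ the value $i$ now follows $i+1$, so $h_{i,[i+1]}$ does nothing; if $i$ follows $i+1$ then $s_i\star v=v$, while $h_{i,[i+1]}(s_iv)$ swaps $i$ back past $i+1$ and recovers $v$. For the inductive step, set $u:=\cs_{i+1,b-1}v$ and $p:=h_{i+1,[i+2,\ldots,j+1]}(u)$, so the induction hypothesis gives $\cs_{i+1,b-1}\star v=p$. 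Applying the base case with $p$ in place of $v$, and using $s_iu=\cs_{i,b}v$, the proposition reduces to the single identity
\[
h_{i,[i+1]}(s_ip) = h_{i,[i+1,\ldots,j+1]}(s_iu).
\]

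The heart of the argument is to compare the two hopping processes. Let $\alpha$ be the position of value $i$ in $u$ and $\beta$ the position of value $i+1$ in $u$. The operator producing $p$ moves $i+1$ rightward from $\beta$ across the larger values $M:=\{i+2,\ldots,j+1\}$, leaving $i$ fixed at $\alpha$ and ending at some position $\gamma$; thus in $p$ the value $i+1$ sits at $\gamma$ and $i$ at $\alpha$, so $s_ip$ has $i$ at $\gamma$ and $i+1$ at $\alpha$. Now in $s_iu$ the values $i$ and $i+1$ are interchanged, so $i$ starts at $\beta$, while $i+1$ sits at $\alpha$ and every element of $M$ occupies exactly the position it had in $u$. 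Since $i$ is smaller than every element of $M$, as long as some element of $M$ lies to the right of the hopped value the operator $h_{i,[i+1,\ldots,j+1]}$ picks that $M$-element and ignores $i+1$; hence this phase of the hop traces the \emph{same} trajectory as the hop producing $p$, carrying $i$ to the same $\gamma$ and leaving $M$ in the same arrangement, so the intermediate permutation is precisely $s_ip$. It then remains to analyze the single possible extra step, namely whether $i+1$ (at $\alpha$) lies to the right of $\gamma$. Since $\alpha\neq\gamma$, exactly one case occurs: if $\alpha<\gamma$ both sides terminate at $s_ip$, while if $\alpha>\gamma$ each side makes one final swap of $i$ with $i+1$ and (as no element of $M$ lies right of $\gamma$, hence none right of $\alpha$) halts, so both sides equal $p$. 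This proves the displayed identity and closes the induction.

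The main obstacle I anticipate is making the ``same trajectory'' claim fully rigorous: one must verify that interchanging the two smallest values $i$ and $i+1$ leaves the positions of all elements of $M$ untouched, and that the greedy ``largest value to the right'' selection therefore produces identical swaps at every step until $M$ is exhausted to the right of the hopped value. This boundary interaction between $i$ and $i+1$ is exactly the situation \emph{not} covered by the commuting relation of Lemma~\ref{lem:transhop} (which requires the transposition index to be at least $t$), so it cannot be delegated to that lemma and must be handled by the direct positional bookkeeping sketched above; Corollary~\ref{cor:shcomm} may still be convenient for tidying the relabeling of the lists should one prefer to phrase the trajectory comparison as a change of variables.
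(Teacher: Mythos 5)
Your proof is correct and follows essentially the same route as the paper's: induction on $j-i$ with the base case $s_i\star v = h_{i,[i+1]}(s_iv)$, reduction via associativity to the identity $h_{i,[i+1]}(s_ip)=h_{i,[i+1,\ldots,j+1]}(s_iu)$, and then the observation that the hop of $i$ in $s_iu$ traces the same swaps as the hop of $i+1$ in $u$ until $M$ is exhausted, followed by a two-case analysis on a possible final swap with $i+1$ (your cases $\alpha<\gamma$ and $\alpha>\gamma$ correspond exactly to the paper's cases of $i+1$ being encountered or not). Your positional bookkeeping with $\alpha,\beta,\gamma$ is a somewhat more explicit rendering of the same argument, and your reading of the hopping rule for increasing lists is the correct one.
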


\begin{proof}
We do strong induction on $j-i$. When $j-i=0$, the claim is that $s_i \star v = h_{i,[i+1]} (s_iv)$ which is straightforward to verify. Assume for sake of induction that we have $(s_{i+1} \cdots s_j) \star v = h_{i+1,[i+2,\ldots,j+1]} (s_{i+1} \cdots s_j v)$. Then all we need to do is show that $h_{i,[i+1]} (s_i h_{i+1,[i+2,\ldots,j+1]} (s_iw)) = h_{i,[i+1,\ldots,j+1]}(w)$, by setting $w = s_i s_{i+1} \cdots s_j v$.

First note that for any permutation $w$, if $i+1$ is not encountered during the hopping process of $i$ in  $h_{i,[i+1,\ldots,j+1]}(w)$, then the hopping of $i+1$ in $h_{i+1,[i+2,\ldots,j+1]} (s_iw)$ follows the same hopping steps as with $h_{i,[i+1,\ldots,j+1]}(w)$ (i.e, the same numbers are getting swapped in the same order).  Hence $$s_i h_{i+1,[i+2,\ldots,j+1]} (s_iw) = h_{i,[i+1,\ldots,j+1]}(w).$$ Since $i$ is to the right of $i+1$ in $h_{i,[i+1,\ldots,j+1]}(w)$, by Defintion \ref{def:hopping_typeA}, we have $$h_{i,[i+1]} h_{i,[i+1,\ldots,j+1]}(w) = h_{i,[i+1,\ldots,j+1]}(w)$$ giving us the desired result.  Otherwise, if $i+1$ is encountered during the hopping process of $i$ in $h_{i,[i+1,\ldots,j+1]}(w)$, then none of the numbers $i+2,\ldots,j+1$ appear to the right of $i+1$ in $w$ (since the elements $i+2,\ldots,j+1$ would have been prioritized to be chosen for the swap). So in $h_{i,[i+1,\ldots,j+1]}(w)$, we have that none of $i+2,\ldots,j+1$ appears between $i+1$ and $i$ (with $i$ being right of $i+1$). Now the hopping of $i+1$ in $h_{i+1,[i+2,\ldots,j+1]} (s_iw)$ follows the same hopping steps except the last one of $h_{i,[i+1,\ldots,j+1]}(w)$.  Hence, we end up with $h_{i,[i+1,\ldots,j+1]}(w)$. So $$h_{i,[i+1]} (s_i h_{i+1,[i+2,\ldots,j+1]}(s_iw)) = h_{i+1,[i+2,\ldots,j+1]} (s_iw) = h_{i,[i+1,\ldots,j+1]}(w)$$ which proves the proposition.
\end{proof}

For example, let $w = 124567893 = s_3s_4s_5s_6s_7s_8=\cs_{3,6}$ and $v = 891726435$. The above proposition implies that $w \star v = h_{3,[4,5,6,7,8,9]} (wv)$. Starting with the usual product $wv = 931827546$, we get

$$9\textbf{3}1\textbf{8}27546 \rightarrow 981\textbf{3}2\textbf{7}546 \rightarrow 98172\textbf{3}54\textbf{6} \rightarrow 981726543.$$
Thus $w \star v=981726543$.

\begin{definition}\label{def:typeA_left_subword}
Given $w \in S_n$, we let $w \nwarrow a$ stand for the subword of $w$ obtained by restricting ourselves to the subword strictly left of $a$, then cutting off the elements smaller than $a$.
\end{definition}

For example, take $w = 891726435$. We have $w \nwarrow 2 = 897$, obtained by taking the subword strictly left of $1$ and then removing elements smaller than $2$. Similarly, we get $w \nwarrow 4 = 8976$, again obtained by taking the subword strictly left of $4$ and then removing elements smaller than $4$.

Notice that if $w = s_i s_{i+1} \cdots s_j$, then $ w_{\nwarrow i}=[i+1,i+2,\ldots,j+1]$. Proposition~\ref{prop:cyclehop} implies that if $w = s_i s_{i+1} \cdots s_j$, then $w \star v = h_{i,w{\nwarrow i}} (wv) $.

\section{The main result}\label{s:main_result_typeA}

In this section we give a formula for the Demazure product between two arbitrary permutations by writing one of the permutations as a product $\cs_{i,j}$'s and then carefully iterating Proposition~\ref{prop:cyclehop}.


\begin{theorem}\label{thm:main}
For any $w,v\in S_n$, we have $w \star v = h_{n-1,w_{\nwarrow n-1}} h_{2,w_{\nwarrow 2}} h_{1,w_{\nwarrow 1}}(wv)$
\end{theorem}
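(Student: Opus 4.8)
The plan is to factor $w$ as a product of the cyclic shifts $\cs_{k,a_k}$, peel these factors off the Demazure product one at a time using Proposition~\ref{prop:cyclehop}, and then use Corollary~\ref{cor:shcomm} to slide each shift past the hopping operators already produced until all the shifts recombine into the ordinary product $wv$. Concretely, I would first record the factorization
\[
w=\cs_{n-1,a_{n-1}}\cs_{n-2,a_{n-2}}\cdots\cs_{1,a_1},\qquad a_k:=|w_{\nwarrow k}|,
\]
and check that it is reduced, i.e.\ $\ell(w)=\sum_k a_k$. Granting this, the concatenated reduced word lets me invoke Lemma~\ref{lemma:moniod_properties}(1) together with associativity of $\star$ to write $w\star v=\cs_{n-1,a_{n-1}}\star\cdots\star\cs_{1,a_1}\star v$.

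Next I would run the peeling induction on the number of factors. Writing $w_k:=\cs_{k,a_k}\cdots\cs_{1,a_1}$ and $P_k:=\cs_{k,a_k}\star P_{k-1}$ with $P_0=v$, Proposition~\ref{prop:cyclehop} gives $P_k=h_{k,[k+1,\ldots,k+a_k]}(\cs_{k,a_k}P_{k-1})$. Since every hopping operator occurring in $P_{k-1}$ has first index $j<k$, Corollary~\ref{cor:shcomm} applies (its hypothesis $a\ge t$ reads $k\ge j$) and lets me move $\cs_{k,a_k}$ inward past all of them, transforming each label set $L$ into $\cs_{k,a_k}(L)$ and turning $\cs_{k,a_k}w_{k-1}$ into $w_k$. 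Iterating from $k=1$ to $n-1$ yields
\[
w\star v=h_{n-1,L_{n-1}}\cdots h_{1,L_1}(wv),\qquad L_j=u_{>j}\big([j+1,\ldots,j+a_j]\big),
\]
where $u_{>j}:=\cs_{n-1,a_{n-1}}\cdots\cs_{j+1,a_{j+1}}$. It then remains to identify $L_j=w_{\nwarrow j}$ as ordered lists.

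Both the reduced factorization and the identity $L_j=w_{\nwarrow j}$ I would prove together by induction on $n$ by relocating the value $1$. With $a_1=|w_{\nwarrow 1}|$ equal to the number of entries left of $1$, the permutation $U:=w\,\cs_{1,a_1}^{-1}$ fixes $1$, and in one-line notation $U$ is obtained from $w$ by sliding the value $1$ leftward to the front; this removes exactly $a_1$ inversions, so $w=U\cs_{1,a_1}$ is reduced. Because passing from $U$ to $w$ only relocates the value $1$, the relative order of all other values is unchanged, whence $w_{\nwarrow k}=U_{\nwarrow k}$ for every $k\ge 2$, while a direct reading of the first $a_1$ entries of $w$ gives $w_{\nwarrow 1}=U([2,\ldots,a_1+1])=L_1$. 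Since $U$ fixes $1$ it is a permutation of $\{2,\ldots,n\}$; after relabeling to $S_{n-1}$, the inductive hypothesis supplies its factorization $U=\cs_{n-1,a_{n-1}}\cdots\cs_{2,a_2}$ with $a_k=|U_{\nwarrow k}|=|w_{\nwarrow k}|$ and the identities $u_{>j}([j+1,\ldots,j+a_j])=U_{\nwarrow j}=w_{\nwarrow j}$ for $j\ge 2$. Combining these with the base computation for $j=1$ closes the induction.

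I expect the main obstacle to be the bookkeeping in this last combinatorial step, rather than the formal assembly. In particular I must verify carefully that $L_j$ and $w_{\nwarrow j}$ agree as ordered lists, not merely as sets, which is why I track that relocating $1$ preserves the relative order of the remaining values, and that the relabeling $\{2,\ldots,n\}\to\{1,\ldots,n-1\}$ carries $\cs_{k,\cdot}$, the statistic $a_k$, and the operator $\nwarrow$ to their $S_{n-1}$ analogues, so that the inductive hypothesis genuinely applies. The repeated use of Corollary~\ref{cor:shcomm} in the sweep is routine once one checks its index hypothesis $k\ge j$ holds at every step.
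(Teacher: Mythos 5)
Your proposal is correct and takes essentially the same route as the paper's proof: the same factorization of $w$ into the shifts $\cs_{k,a_k}$ determined by the inversion sequence $a_k=|w_{\nwarrow k}|$, peeling off factors via Proposition~\ref{prop:cyclehop}, sliding them past lower-indexed hopping operators via Corollary~\ref{cor:shcomm}, and identifying the transformed lists with $w_{\nwarrow j}$. The only differences are presentational: you explicitly verify that the factorization is reduced and prove the identification $L_j=w_{\nwarrow j}$ by induction on $n$ (relocating the value $1$), whereas the paper asserts these as direct observations about the partial products.
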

\begin{proof}
Let $(j_1,\ldots, j_{n-1})$ denote the inversion sequence of $w$ (see \cite[Chapter 1.3]{Stanley12}).  In other words, $j_i$ denotes the number of inversions in $w$ of the form $(k,i)$.  It is easy to check that
\begin{equation}\label{eqn:type_A_stringcomposition}
w=\cs_{n-1,j_{n-1}} \cdots\cs_{2,j_2} \cs_{1,j_1}.
\end{equation}
The reason we use this decomposition is that to the left of $\cs_{i,j_i}$ restricted to $i,i+1,\ldots,n$ is exactly the same as the subword of $w$ restricted to $i,i+1,\ldots,n$.


From Proposition~\ref{prop:cyclehop}, we get that
$$w \star v =  \cdots h_{i,[i+1,\ldots,j_i+1]} \cs_{i,j_i} \cdots h_{2,[3,\ldots,j_2+1]} \cs_{2,j_2} (h_{1,[2,\ldots,j_{1}+1]} \cs_{1,j_1} v).$$

It is enough to show that for each $i<n$, we have
$$\cs_{n-1,j_{n-1}} \cdots \cs_{i,j_i} h_{i,[i+1,\ldots,j_i+1]} = h_{i,w_{\nwarrow i}} \cs_{n-1,j_{n-1}} \cdots \cs_{i,j_i}.$$

This follows from Corollary~\ref{cor:shcomm} and the previous observation that $\cs_{n-1,j_{n-1}} \cdots \cs_{i,j_i}$ restricted to $i,i+1,\ldots,n$ is exactly same as that of $w$ and that we can truncate the list in a hopping operator of $h_{i,L}$ by removing the elements in $L$ that are smaller than $i$.

\end{proof}

For example, let $w=6541723$ and $v=5436217$.  The usual product of these two permutations is $wv=7142563$.  The Demazure product $w\star v$ corresponds to the sequence of hopping operators
$$h_{5,[6]}h_{4,[6,5]}h_{3,[6,5,4,7]}h_{2,[6,5,4,7]}h_{1,[6,5,4]}$$
acting on the usual product $wv$.  Applying each of the hopping operators in order to $wv$, we get 
\begin{align*}
7142563 \xrightarrow[h_{1,[6,5,4]}]{}  7452613 \xrightarrow[h_{2,[6,5,4,7]}]{} & 7456213\\
& \xrightarrow[h_{3,[6,5,4,7]}]{}  7456213  \xrightarrow[h_{4,[6,5]}]{}  7564213 \xrightarrow[h_{5,[6]}]{} 7654213.
\end{align*}



This gives us $6541723 \star 5436217 = 7654213$.

\section{Signed permutations}\label{s:typeB}

In this section, we prove an analogue of Theorem \ref{thm:main} for the group of signed permutations, also known as Coxeter groups of type $B$ (or equivalently, type $C$).  Signed permutations can be viewed as a permutation subgroup of $S_{2n}$. Let $\{s'_1,\ldots,s'_{2n-1}\}$ be the simple generators of the permutation group $S_{2n}$.  We define $B_n$ to be the subgroup of $S_{2n}$ generated by $S:=\{s_1,\ldots,s_n\}$ where
\begin{equation}\label{eqn:BtoA}
s_i:=s'_i\, s'_{2n-i}\ \text{for $1\leq i< n$ and}\quad s_n:=s'_n.
\end{equation}
The convention we use regarding type $B$ simple transpositions follows those found in \cite{billey2000singular}.  As a Coxeter group, the generators $s_1,\ldots,s_{n-1}$ of $B_n$ satisfy the same relations as in type $A$ (see Equation \eqref{eqn:type_A_braid}) with the last generator $s_n$ satisfying:
$$(s_is_n)^2=id\ \text{ for $1\leq i<n-1$ and } (s_{n-1}s_n)^4=id.$$



Similar to the symmetric group, the elements of the Coxeter group $B_n$ can be interpreted using a one-line notation called the signed permutations \cite{Bjorner-Brenti05}. The convention we use here will be slightly different from that of \cite{Bjorner-Brenti05} in the sense that $s_n$ plays the role of $s_0$ in \cite{Bjorner-Brenti05}.

\begin{definition}
A \newword{signed permutation} of type $B_n$ is a permutation of $[n]$ along with a sign of $+$ or $-$ attached to each number.
\end{definition}

For example, the signed permutation $[4,-2,3,-1]$ is an element of $B_4$.  The generator $s_i\in B_n$ corresponds to the simple transposition swapping $i$ and $i+1$ if $i<n$ and $s_n$ to the transposition swapping $n$ with $-n$.   The product structure on signed permutations is just the usual composition of permutations with the added condition that $w(-i)=-w(i)$.  Let $\pm [n]$ denote the set $[n] \cup -[n]$, where $-[n] := \{-1,\ldots,-n\}$.   We impose the total ordering on $\pm [n]$ given by:
$$1 < 2<\dots < n < -n <  \dots<-2 < -1.$$
By \newword{unfolding} of a signed permutation $w \in B_n$ we mean the following: to the right of $w$, attach a reverse ordered copy of $w$ with the signs flipped to get a permutation of $\pm [n]$.  The unfolding map respects the embedding of $B_n$ as a subgroup of $S_{2n}$ given above. Specifically, if we replace $-[n]$ with $\{n+1,\ldots,2n\}$, then the unfolding map assigns to each signed permutation in $B_n$ a standard permutation in $S_{2n}$.  For example the unfolding of $[4,-2,3,-1]$ is
$$[4,-2,3,-1,1,-3,2,-4]$$ and the corresponding permutation of $[8]$ is $[4,7,3,8,1,6,2,5].$  Conversely, given a permutation of of $\pm[n]$ where the $i$-th entry is the opposite sign of $(2n+1-i)$-th entry, we can \newword{fold} the permutation to get a signed permutation on $[n]$.  In this section, we will slightly abuse notation and identify a signed permutation of $B_n$ with its unfolding in $S_{2n}$.  When referring the generators of $S_{2n}$, we set $$s'_{-i}:=s'_{2n-i}$$ for any $i< n$ and hence $s_i:=s'_is'_{-i}$.



\begin{lemma}
\label{lem:b_unfold}
For any signed permutations $w,v \in B_n$, we have
\begin{equation}\label{eqn:fold-unfold}
w \star v = \fold(\unfold(w) \star \unfold(v)).
\end{equation}
\end{lemma}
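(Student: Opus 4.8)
The plan is to reduce the identity to the case where the first factor is a single simple generator and then to exploit the fact that the folding involution of $S_{2n}$ is simultaneously an automorphism of the Coxeter monoid. Throughout I write $\ell$ for the relevant length function (in $B_n$ or in $S_{2n}$, according to context) and let $\sigma\colon S_{2n}\to S_{2n}$ be the automorphism determined by $\sigma(s'_j)=s'_{2n-j}$; equivalently, $\sigma$ is conjugation by the longest element of $S_{2n}$. Since $\sigma$ permutes the simple generators and preserves the braid relations it is length-preserving, and since it also preserves the idempotent relations $s'\star s'=s'$ it is an automorphism of $(S_{2n},\star)$. A permutation in $S_{2n}$ is foldable (i.e.\ lies in the image of $\unfold$) precisely when it is fixed by $\sigma$; in particular $\sigma(\unfold(x))=\unfold(x)$ for every $x\in B_n$.

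First I would observe that foldability is preserved by the Demazure product, so that the right-hand side of the claimed identity is well defined. Indeed, for $w,v\in B_n$,
$$\sigma\bigl(\unfold(w)\star\unfold(v)\bigr)=\sigma(\unfold(w))\star\sigma(\unfold(v))=\unfold(w)\star\unfold(v),$$
so $\unfold(w)\star\unfold(v)$ is $\sigma$-fixed, hence foldable. It therefore suffices to prove the equivalent statement $\unfold(w\star v)=\unfold(w)\star\unfold(v)$ inside $S_{2n}$.

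The heart of the argument is the single-generator identity $\unfold(s_i\star u)=\unfold(s_i)\star\unfold(u)$ for all $u\in B_n$, where $\unfold(s_i)=s'_i s'_{-i}=s'_i\star s'_{-i}$ for $i<n$ and $\unfold(s_n)=s'_n$. Granting this, the theorem follows by induction on $\ell(w)$, the case $w=e$ being immediate. For the inductive step, choose a reduced factorization $w=s_i w'$; associativity of $\star$ together with Lemma~\ref{lemma:moniod_properties} gives $w\star v=s_i\star(w'\star v)$, so
$$\unfold(w\star v)=\unfold(s_i)\star\unfold(w'\star v)=\unfold(s_i)\star\unfold(w')\star\unfold(v),$$
using the single-generator identity and then the inductive hypothesis; since $s_i w'$ is reduced, a final application of the single-generator identity gives $\unfold(s_i)\star\unfold(w')=\unfold(s_i\star w')=\unfold(w)$, as desired. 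To prove the single-generator identity I would combine two inputs. The first is a descent dichotomy: because $\unfold(u)$ is $\sigma$-fixed, $\sigma$ is length-preserving, and $\sigma(s'_i)=s'_{-i}$, the generators $s'_i$ and $s'_{-i}$ are simultaneously left descents or simultaneously left ascents of $\unfold(u)$; as they commute and act on disjoint pairs of values, $s'_i\star s'_{-i}\star\unfold(u)$ equals $\unfold(u)$ in the descent case and $s'_i s'_{-i}\unfold(u)$ in the ascent case (with the obvious one-generator analysis for $s'_n$). The second is descent compatibility between the two groups: $s_i$ is a left descent of $u$ in $B_n$ if and only if $s'_i$ (equivalently $s'_{-i}$) is a left descent of $\unfold(u)$ in $S_{2n}$, and likewise $s_n$ versus $s'_n$. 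Feeding these into Lemma~\ref{lemma:moniod_properties}(2), applied in both groups, yields the single-generator identity in each case.

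The main obstacle is the descent-compatibility input. I expect to obtain it from the length identity $\ell(\unfold(u))=2\,\ell(u)-(\text{number of negative entries of }u)$: for $i<n$, left multiplication by $s_i$ leaves the number of negative entries unchanged, so a unit change of $\ell$ in $B_n$ corresponds to a change of $2$ in $\ell$ after unfolding, which by the dichotomy forces $s'_i$ and $s'_{-i}$ to pass from ascents to descents together; the generator $s_n$ needs a separate but routine bookkeeping of signs. This length identity — equivalently, the statement that unfolding carries reduced words to reduced words — is the one genuinely combinatorial step, and it is precisely where the explicit structure of the unfolding map must be used, all of the remaining steps being formal manipulations in the Coxeter monoid.
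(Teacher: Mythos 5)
Your proof is correct and takes essentially the same route as the paper's: induction on $\ell(w)$ with the single-generator case as the base, handled by applying Lemma~\ref{lemma:moniod_properties}(2) in both $B_n$ and $S_{2n}$ after matching descents across the unfolding, and with associativity of $\star$ doing the inductive step. The extra details you supply---the $\sigma$-fixedness argument showing $\unfold(w)\star\unfold(v)$ is foldable, and the length identity $\ell_A(\unfold(u))=2\ell_B(u)-\mathrm{neg}(u)$ justifying the descent compatibility---are precisely the points the paper's proof treats tersely (it simply asserts the descent equivalence from the commutativity of $s'_i$ and $s'_{-i}$), so your write-up is a more fully justified version of the same argument.
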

\begin{proof}
First observe that Equation \eqref{eqn:fold-unfold} holds if we replace $\star$ with the group product since the unfolding map corresponds to the embedding of $B_n$ into $S_{2n}$.   We proceed by induction on the length of $w$ and will use $\ell_B,\ell_A$ to denote length in the Coxeter groups $B_n$ and $S_{2n}$ respectively.  First, suppose that $w=s_i$ where $i<n$.  Then $\unfold(w)=s'_i\, s'_{-i}$ with respect to the embedding of $B_n$ into $S_{2n}$.  Since $s'_i$ commutes with $s'_{-i}$, we have that $\ell_B(sv)=\ell_B(v)-1$ if and only if $\ell_A(\unfold(sw))=\ell_A(\unfold(w))-2$.  Lemma \ref{lemma:moniod_properties} part (2) implies
\begin{equation*}  
s\star w=\fold(\unfold(s\star w))=\fold(\unfold(s)\star\unfold(w)).
\end{equation*}
A similar argument holds when $w=s_n$ and $\unfold(w)=s'_{n}$.  This proves the lemma in the case when $\ell_B(w)=1$.  Now suppose that $\ell_B(w)>1$ and write $w=sw'$ for some $s\in S$ and $w'\in B_n$ where $\ell_B(w)=\ell_B(w')+1$.  By induction we get
$$w\star v=sw'\star v=s\star (w'\star v)=s\star \fold(\unfold(w')\star \unfold(v)).$$
The inductive base case above implies
\begin{align*}
s\star \fold(\unfold(w')\star \unfold(v))&=\fold(\unfold(s)\star\unfold(w')\star\unfold(v))\\
&=\fold(\unfold(w)\star\unfold(v)).
\end{align*}
This completes the proof.
\end{proof}

Next, we define a hopping operator for $B_n$ analogous to Definition \ref{def:hopping_typeA} for permutations.
\begin{definition}\label{def:hopping_typeB}
Let $t \in \pm [n]$ and $L$ an ordered subset $\pm [n]$ (without repetition).  The \newword{hopping operator}
$$h_{t,L}:B_n\rightarrow B_n$$ acts on a signed permutation $w$ by the following algorithm:  Scan to the right (within the unfolding of $w$) of $t$ and look for the element furthest to the right in $L$ that is greater than $t$. If it exists, say $q$, then swap $t$ and $q$ and also swap $-t$ with $-q$ (unless $t = -q$).  Replace $w$ with the resulting unfolded signed permutation and repeat. The algorithm ends when there are no elements of $L$ within $w$ to the right of $t$.
\end{definition}

For example, let $w=[2,3,5,-1,4]$ with $t=1$ and $L=[-2,-3,4]$.  We calculate the hopping operator $h_{1,[-2,-3,4]}(w)$. First we unfold $w$, which gives $$\unfold(w)=[2,3,5,-1,4,-4,1,-5,-3,-2].$$ To the right of $1$ we have $[-5,-3,-2]$. We first swap $1$ with $-3$, since $-3$ is the rightmost element of $L$ that exists here. This gives us $[2,-1,5,3,4,-4,-3,-5,1,-2]$. After that, we again scan to the right we have $[-2]$. Then we swap $1$ with $-2$, to get $[-1,2,5,3,4,-4,-3,-5,-2,1]$. So the signed permutation we end up with is $[-1,2,5,3,4]$.


Just like the type $A$ case, hopping operators satisfy a commuting relation with simple transpositions as in Lemma \ref{lem:b_simplehop}.  We omit the proof since it is analogous to that of Lemma \ref{lem:b_simplehop}.  Similar to the type $A$ case, we let $B_n$ act on sub-lists of $\pm[n]$ via the corresponding signed permutation.


\begin{lemma}\label{lem:typeB_simple_commute}
\label{lem:b_transhop}
Let $w\in B_n$.  For any $i \geq t$ and ordered subset $L\subseteq \pm[n]$ we have
$$s_i\cdot h_{t,L}(w) = h_{t,s_i(L)} (s_i\cdot w).$$
\end{lemma}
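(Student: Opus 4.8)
The plan is to reduce the type $B$ commuting relation to the type $A$ version already established in Lemma~\ref{lem:transhop}, using the identification of signed permutations with their unfoldings in $S_{2n}$. Recall that for $i<n$ we have $s_i = s'_i\, s'_{-i}$ as elements of $S_{2n}$, where $s'_{-i}=s'_{2n-i}$, and that the type $B$ hopping operator $h_{t,L}$ acts by performing the corresponding type $A$ hops on the unfolding while simultaneously enforcing the sign-symmetry (swapping $-t$ with $-q$ whenever $t$ swaps with $q$). The key observation is that this symmetry is exactly what the paired generator $s'_{-i}$ accounts for: conjugating/commuting a single $s_i$ past a type $B$ hop is the same as commuting the commuting pair $s'_i\, s'_{-i}$ past the corresponding pair of symmetric type $A$ hops.

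First I would handle the generic case $i<n$. Here $s_i$ acts on $\pm[n]$ by swapping the symbols $i$ and $i+1$ (and hence $-i$ and $-(i+1)$). Since $i\geq t$, the element $t$ is never equal to $i$ or $i+1$ on the ``positive'' side, so as in the type $A$ argument the trajectory of $t$ through the unfolded one-line notation is unaffected by relabeling $i\leftrightarrow i+1$: the positions visited by $t$ during the hops of $h_{t,L}(w)$ are identical to those visited during the hops of $h_{t,s_i(L)}(s_i w)$, because we have correspondingly relabeled the target list $L\mapsto s_i(L)$. The only difference between the two resulting signed permutations is the transposition of the symbols $i$ and $i+1$ (together with their negatives), which is precisely the action of $s_i$. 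This gives $s_i\cdot h_{t,L}(w)=h_{t,s_i(L)}(s_i\cdot w)$. Concretely, I would invoke Lemma~\ref{lem:transhop} on the unfolded permutations: applying the type $A$ relation to the pair $(s'_i, s'_{-i})$ and using that $s'_i$ and $s'_{-i}$ commute, one obtains the type $B$ identity after folding via Lemma~\ref{lem:b_unfold}.

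Next I would treat the boundary case $i=n$, where $s_n=s'_n$ swaps $n$ with $-n$ in the unfolding. Since $i=n\geq t$ forces $t\leq n$ in the ordering $1<\cdots<n<-n<\cdots<-1$, and the condition in the hopping definition requires swapping only with elements \emph{greater than} $t$, I would check that relabeling $n\leftrightarrow -n$ interacts correctly with the ``greater than $t$'' comparisons and with the special clause ``unless $t=-q$''. The argument is the same in spirit: the trajectory of $t$ is preserved under the simultaneous relabeling of symbols in $w$ and in $L$, and the end results differ only by the swap $s_n$.

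The main obstacle I anticipate is \emph{not} the combinatorial bookkeeping of trajectories (which is essentially verbatim from Lemma~\ref{lem:transhop}) but rather verifying that the sign-pairing clause ``swap $-t$ with $-q$ unless $t=-q$'' in Definition~\ref{def:hopping_typeB} commutes cleanly with $s_i$. One must confirm that when $s_i$ relabels a symbol, the paired negative symbol is relabeled consistently, so that the enforced symmetry of the unfolding is maintained at every step; this is where the hypothesis $i\geq t$ and the self-paired structure $s_i=s'_i\,s'_{-i}$ are doing the real work. Once this compatibility is checked, the identity follows immediately, and indeed the authors note the proof is analogous to the type $A$ case, so I would keep the write-up brief and lean on Lemma~\ref{lem:transhop} and Lemma~\ref{lem:b_unfold} rather than re-deriving the trajectory analysis from scratch.
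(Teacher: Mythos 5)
Your first paragraph is essentially the paper's own (omitted) proof: the authors simply declare this lemma analogous to Lemma~\ref{lem:transhop}, and the intended argument is exactly the trajectory analysis you describe --- relabeling $i\leftrightarrow i+1$ together with $-i\leftrightarrow -(i+1)$ (or $n\leftrightarrow -n$ when $i=n$) in both $w$ and $L$ leaves every hop, including the mirrored swaps and the ``unless $t=-q$'' clause, unchanged, so the two sides differ exactly by left multiplication by $s_i$. On that level your proposal matches the paper.

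However, the write-up strategy you settle on at the end --- ``lean on Lemma~\ref{lem:transhop} and Lemma~\ref{lem:b_unfold} rather than re-deriving the trajectory analysis'' --- has a genuine gap. Lemma~\ref{lem:transhop} is a statement about the type $A$ operator $h^A_{t,L}$, and the type $B$ operator of Definition~\ref{def:hopping_typeB} acting on $\unfold(w)$ is \emph{not} $h^A_{t,L}$, nor any fixed composition of type $A$ hopping operators: the mirrored swaps are interleaved with the hops of $t$, and they can move elements of $L$ into the region to the right of $t$, creating hops that $h^A_{t,L}$ would never perform. For instance, in $B_2$ take $w=[1,2]$, $t=1$, $L=[2,-2]$: the type $B$ operator performs two hops (the mirrored swap of the first hop places $2$ to the right of $1$) and yields $[-1,-2]$, whereas $h^A_{1,L}(\unfold(w))=[-2,2,1,-1]$, which after symmetrizing folds to $[-2,-1]$. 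So one cannot commute $s'_i$ and $s'_{-i}$ past the type $B$ operator by citing Lemma~\ref{lem:transhop}; the trajectory argument must be rerun directly for Definition~\ref{def:hopping_typeB}, which is what your first paragraph does and what the paper intends. (Lemma~\ref{lem:b_unfold} does not help here either: it is a statement about the Demazure product $\star$, not about hopping operators. The paper's one successful reduction of a type $B$ hop to type $A$ hops, in Lemma~\ref{lem:b_simplehop}, works precisely because the lists there are singletons, so at most one hop occurs and no interleaving can arise.) A minor shared caveat: your assertion that $t$ is never equal to $i$ or $i+1$ requires $i>t$, not the stated $i\geq t$; the case $t=i$ is already problematic in type $A$ (e.g.\ $w=12$, $t=i=1$, $L=[2]$ gives $12\neq 21$), but this imprecision is inherited from the paper's own statement of Lemma~\ref{lem:transhop} and is harmless for how the lemma is actually used.
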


Recall that in the proof of Theorem \ref{thm:main}, we defined $\cs_{a,b}:=s_a \cdots s_{a+b-1}$ and used the fact that any permutation naturally decomposes into a product of $\cs_{a,b}$'s (see Equation \eqref{eqn:type_A_stringcomposition}).  For the type $B_n$ case, we will define the analogous product of simple generators
$$\cs^B_{a,b}:=s_a\cdots s_{a+b-1}$$
where for any $j>1$, we set $s_{n+j}:=s_{n-j}$.  Note that if $a\leq n$, then $1\leq b\leq 2n-a$.  For example, in $B_7$, we have
$$\cs^B_{5,6} = s_5 s_6 s_7 s_8 s_9 s_{10} = s_5 s_6 s_7 s_6 s_5 s_4.$$
As a signed permutation, the product $\cs^B_{a,b}$ corresponds to unfolding the identity permutation $[1,2,\ldots,n]$ and shifting $a$ to the right by $b$ positions, then placing $-a$ in the mirrored position.  For example, in $B_7$ we have $$\cs^B_{5,6} =[1,2,3,-5,4,6,7,-7,-6,-4,5,-3,-2,-1]=[1,2,3,-5,4,6,7].$$
An immediate corollary of Lemma \ref{lem:typeB_simple_commute} is the following.

 %


\begin{corollary}
\label{cor:b_shcomm}
Let $w\in B_n$ with $a\leq n$ and $1\leq b\leq 2n-a$.  For any $t \leq a$ and ordered subset $L\subseteq \pm[n]$, we have
$$\cs^B_{a,b}\cdot h_{t,L}(w) = h_{t,\cs^B_{a,b}(L)}( \cs^B_{a,b}\cdot w).$$
\end{corollary}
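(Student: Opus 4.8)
The plan is to prove Corollary~\ref{cor:b_shcomm} in exactly the way one proves its type $A$ counterpart Corollary~\ref{cor:shcomm}: unwind $\cs^B_{a,b}$ into its defining word of simple generators and then push the hopping operator $h_{t,L}$ to the left past one generator at a time, invoking Lemma~\ref{lem:typeB_simple_commute} at each step. Writing $\cs^B_{a,b}=s_a s_{a+1}\cdots s_{a+b-1}$ (with the convention $s_{n+j}=s_{n-j}$), each application of the single-generator relation $s_c\cdot h_{t,M}=h_{t,s_c(M)}\cdot s_c$ moves one generator $s_c$ across the operator while relabeling the list $M\mapsto s_c(M)$. Composing these relabelings from the innermost generator outward turns the initial list $L$ into $s_a\cdots s_{a+b-1}(L)=\cs^B_{a,b}(L)$, which is exactly the list appearing on the right-hand side.

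First I would set up an induction on $b$. For the base case $b=1$ we have $\cs^B_{a,1}=s_a$, and since $t\leq a$ the hypothesis $a\geq t$ of Lemma~\ref{lem:typeB_simple_commute} is met, so the relation is immediate. For the inductive step I would factor $\cs^B_{a,b}=\cs^B_{a,b-1}\,s_{a+b-1}$, first commute $h_{t,L}$ past the rightmost generator $s_{a+b-1}$ using the lemma, and then apply the inductive hypothesis to the shorter word $\cs^B_{a,b-1}$ acting on the relabeled list $s_{a+b-1}(L)$. The only bookkeeping is to check that $\cs^B_{a,b-1}\big(s_{a+b-1}(L)\big)=\cs^B_{a,b}(L)$, which is just associativity of the $B_n$-action on ordered sublists of $\pm[n]$.

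The hard part will be verifying the hypothesis $c\geq t$ of Lemma~\ref{lem:typeB_simple_commute} for \emph{every} generator in the word, and this is where type $B$ genuinely differs from type $A$. The ascending part $s_a,s_{a+1},\ldots,s_n$ of the word consists of generators of index $\geq a\geq t$, so these are handled verbatim as in type $A$. The new feature is the wrapped tail: once the unfolded index passes $n$, the convention $s_{n+j}=s_{n-j}$ produces generators of \emph{decreasing} index $s_{n-1},s_{n-2},\ldots$, and for large $b$ these indices can drop below $t$. Here I would argue that such a low-index generator $s_c$ with $c<t$ transposes only the entries $c,c+1$ and their negatives; for $c\leq t-2$ these lie strictly below $t$ and hence never meet the trajectory of the hopped value $t$, so $s_c$ commutes with the hopping of $t$ for the same reason the lemma does. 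The one delicate case is the boundary generator $s_{t-1}$, which swaps $t-1$ with $t$, and I expect the coordinated double-swap built into Definition~\ref{def:hopping_typeB}—swapping $-t$ with $-q$ alongside $t$ with $q$—to be precisely what reconciles it. Pinning down this wrap-around interaction, rather than the formal induction, is the step that requires care.
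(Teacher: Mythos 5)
Your overall plan---peel one generator off $\cs^B_{a,b}=s_as_{a+1}\cdots s_{a+b-1}$ at a time and invoke Lemma~\ref{lem:typeB_simple_commute}, relabeling the list as you go---is exactly the route the paper intends (it declares the corollary ``immediate'' from that lemma), and your instinct that the wrapped tail is where the danger lies is correct. But the gap you flag at the end is not just a step requiring care: it is unfillable, because the identity genuinely fails once the word contains a generator of actual index $t$ or $t-1$, and the stated hypotheses $t\le a$, $b\le 2n-a$ permit this. Concretely, take $n=3$, $a=3$, $b=3$, $t=2$, $w=\mathrm{id}$, $L=[-3]$. Then $\cs^B_{3,3}=s_3s_4s_5=s_3s_2s_1=[-3,1,2]$, and $h_{2,[-3]}(\mathrm{id})=[1,-3,-2]$, so the left-hand side is $\cs^B_{3,3}\cdot[1,-3,-2]=[-3,-2,-1]$; on the other side $\cs^B_{3,3}(L)=[-2]$, $\cs^B_{3,3}\cdot\mathrm{id}=[-3,1,2]$, and $h_{2,[-2]}([-3,1,2])=[-3,1,-2]$. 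The two sides differ. The failure is exactly that $s_2=s_t$ and $s_1=s_{t-1}$ rename the letter being hopped, so no coordination of the double-swap in Definition~\ref{def:hopping_typeB} can rescue the commutation; the corollary (and the paper's own one-line derivation of it) is only correct under the stronger hypothesis that no generator of index $t-1$ or $t$ appears in the word, i.e.\ $t<a$ and $a+b-1<2n-t$. Under that hypothesis your induction on $b$ closes without any delicate case at all.

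Two further corrections to your case analysis. First, your justification for the generators $s_c$ with $c\le t-2$ is wrong even though the conclusion is right: $s_c$ also transposes $-c$ and $-(c+1)$, and in the order $1<2<\cdots<n<-n<\cdots<-1$ these lie \emph{above} $t$, so they can perfectly well belong to $L$ and be swapped with $t$ during the hopping. The correct reason such $s_c$ commute is not that they avoid the trajectory, but that $s_c$ is a relabeling of values fixing $\pm t$ and preserving the set of letters greater than $t$; since the list is relabeled in tandem ($L\mapsto s_c(L)$), the hopping performs the same swaps at the same positions, and the outputs differ by the relabeling $s_c$. Second, you treat the ascending part as safe ``verbatim as in type $A$,'' but the index-$t$ generator is just as fatal as the index-$(t-1)$ one, and it can occur both as the first letter (when $a=t$) and in the tail (nominal index $2n-t$): already $n=2$, $a=t=b=2$, $w=\mathrm{id}$, $L=[-2]$ gives $\cs^B_{2,2}\cdot h_{2,[-2]}(\mathrm{id})=[-2,-1]$ but $h_{2,[-1]}([-2,1])=[-2,1]$. (This same off-by-one afflicts the paper: Lemmas~\ref{lem:transhop} and~\ref{lem:typeB_simple_commute} require $i>t$, not $i\ge t$, since for $i=t$ the two sides hop different letters.) So your proposal is, to its credit, more alert than the paper's proof at precisely this point---but the patch you hope for does not exist; the statement itself must be restricted.
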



As in the type $A$ case, the Demazure product with $\cs^B_{a,b}$ can be described using the usual composition product on signed permutations and the hopping operator given in Definition \ref{def:hopping_typeB}. Recall we identified $s_{n+j}$ with $s_{n-j}$ for $1 \leq j < n$. Similarly we use $n+j$ to denote $-(n+1-j)$ for $1 \leq j < n$ when we are dealing with elements of $\pm [n]$.

\begin{lemma}
\label{lem:b_simplehop}
Let $v \in B_n$. For any $i < n$, we have
$$s_i \star v = h_{i,[i+1]} (s_iv)$$ and
$$s_n \star v = h_{n,[-n]} (s_nv).$$
\end{lemma}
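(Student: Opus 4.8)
The plan is to reduce the statement to the defining property of the Demazure product recorded in Lemma~\ref{lemma:moniod_properties}(2) and then verify the two resulting cases by directly running the hopping algorithm of Definition~\ref{def:hopping_typeB}. Since $s_i$ is a simple generator, Lemma~\ref{lemma:moniod_properties}(2) tells us that $s_i\star v$ equals $v$ when $\ell_B(s_iv)<\ell_B(v)$ and equals $s_iv$ when $\ell_B(s_iv)>\ell_B(v)$; so it suffices to show that $h_{i,[i+1]}(s_iv)$ (resp.\ $h_{n,[-n]}(s_nv)$) returns $v$ in the descent case and $s_iv$ (resp.\ $s_nv$) in the ascent case. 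The bridge between the length condition and the hopping algorithm is the relative order of the relevant values in $\unfold(v)$: using the length comparison established in the proof of Lemma~\ref{lem:b_unfold} (where $\ell_B$ drops by one exactly when $\ell_A$ of the unfolding drops by two), one checks that for $i<n$ we have $\ell_B(s_iv)<\ell_B(v)$ precisely when $i+1$ occurs to the left of $i$ in $\unfold(v)$, and for $i=n$ that $\ell_B(s_nv)<\ell_B(v)$ precisely when $-n$ occurs to the left of $n$ in $\unfold(v)$.

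I would first treat the case $i<n$. Here left multiplication by $s_i$ interchanges the values $i\leftrightarrow i+1$ and $-i\leftrightarrow -(i+1)$ throughout $\unfold(v)$, so $\unfold(s_iv)$ differs from $\unfold(v)$ by exactly this pair of transpositions. In the ascent case $i$ lies to the left of $i+1$ in $\unfold(v)$, hence $i+1$ lies to the left of $i$ in $\unfold(s_iv)$; scanning to the right of $i$ in $\unfold(s_iv)$ finds no element of $L=[i+1]$, so the algorithm makes no swap and $h_{i,[i+1]}(s_iv)=s_iv$, as required. In the descent case $i+1$ lies to the left of $i$ in $\unfold(v)$, so in $\unfold(s_iv)$ the value $i$ lies to the left of $i+1$; the algorithm then locates $i+1$ to the right of $i$, swaps $i\leftrightarrow i+1$ together with $-i\leftrightarrow -(i+1)$, and thereby exactly inverts the two transpositions introduced by $s_i$, returning $\unfold(v)$. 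Since after this single swap $i+1$ sits to the left of $i$, the scan terminates, giving $h_{i,[i+1]}(s_iv)=v$.

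The case $i=n$ runs in parallel, with the one modification that $s_n$ interchanges only the single pair $n\leftrightarrow -n$, and that here $t=n$, $q=-n$ satisfy $t=-q$. Consequently the exceptional clause in Definition~\ref{def:hopping_typeB} applies and the algorithm performs just the swap $n\leftrightarrow -n$ rather than a double swap. The casework is then identical: in the ascent case no element of $L=[-n]$ appears to the right of $n$ in $\unfold(s_nv)$ and the output is $s_nv$; in the descent case the single swap $n\leftrightarrow -n$ inverts $s_n$ and the output is $v$.

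The main point requiring care is the bookkeeping inside the type $B$ hopping algorithm: one must confirm that a single swap of the value pair $\{i,i+1\}$ (together with $\{-i,-(i+1)\}$) is literally inverse to left multiplication by $s_i$ on the unfolding, and that the scan halts after exactly one swap. The subtlety is concentrated in the exceptional clause ``$t=-q$'' of Definition~\ref{def:hopping_typeB}, which is exactly what prevents the $i=n$ swap from being double-counted; verifying that this clause produces the single transposition matching $s_n=s'_n$ is the only place where types $B$ and $A$ genuinely diverge, and it is where I would focus the write-up.
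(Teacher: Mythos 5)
Your proof is correct, but it takes a genuinely different route from the paper's. The paper derives the lemma from the type $A$ machinery already in place: for $i<n$ it writes $s_i = s'_{-i}s'_i$, uses Lemma~\ref{lem:b_unfold} to move the computation into $S_{2n}$, invokes Proposition~\ref{prop:cyclehop} to express $(s'_{-i}s'_i)\star \unfold(v)$ as a composition of two type $A$ hops $h^A_{-(i+1),[-i]}\,h^A_{i,[i+1]}$, and then folds, observing that the two hops mirror each other and hence assemble into the single type $B$ hop $h_{i,[i+1]}$; the case $i=n$ is handled by a direct observation about the sign of $n$ in $v$. You instead argue straight from the defining recursion of the Demazure product, Lemma~\ref{lemma:moniod_properties}(2): you characterize descent versus ascent of $s_i$ at $v$ by the relative position of $i$ and $i+1$ (resp.\ $n$ and $-n$) in $\unfold(v)$, then check that in the ascent case the hopping algorithm makes no swap (returning $s_iv$), while in the descent case it makes exactly one swap, which is literally inverse to left multiplication by $s_i$ (returning $v$), after which the scan halts. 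Both arguments are valid. The paper's buys economy and uniformity, reusing Proposition~\ref{prop:cyclehop} and Lemma~\ref{lem:b_unfold} in keeping with its overall strategy of folding type $A$ results into type $B$; yours is self-contained and more elementary, needing only the definitions, and it makes transparent why the lists $[i+1]$ and $[-n]$, and the exceptional clause $t=-q$ in Definition~\ref{def:hopping_typeB}, are exactly right. One small wording point: the equivalence ``$\ell_B$ drops by one iff $\ell_A$ of the unfolding drops by two'' is the correct justification only for $i<n$; for $i=n$ one has $s_n=s'_n$, so there $\ell_B$ drops by one iff $\ell_A$ drops by one --- your stated conclusion for that case is nevertheless correct.
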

\begin{proof}  In this proof, let $h^A_{i,L}$ denote the hopping operator given in Definition \ref{def:hopping_typeA} acting on the permutation group $S_{2n}$ and $h^B_{i,L}$ denote the hopping operator given in Definition \ref{def:hopping_typeB} acting on $B_n\subseteq S_{2n}$.
If $i < n$, then $s_i = s_{-i}'s_i'$ and by Lemma \ref{lem:b_unfold}, we have
$$\unfold(s_i \star v) = s_{-i}'s_i' \star \unfold(v).$$
Proposition~\ref{prop:cyclehop} implies $$(s_{-i}'{s_i}') \star \unfold(v) = h^A_{-(i+1),[-i]} h^A_{i,[i+1]} s_{-i}'{s_i}' \unfold(v).$$
Note that swapping $i$ with $i+1$ in $h^A_{i,[i+1]}$ mirrors swapping $-(i+1)$ with $-i$ in $h^A_{-(i+1),[-i]}$. Hence Lemma~\ref{lem:b_unfold} implies
$$\fold((s_{-i}'{s_i}') \star \unfold(v))=h^B_{i,[i+1]}(s_iv).$$
In the case that $i = n$, note that $s_n \star v = v$ if the sign of $n$ in $v$ is negative and $s_n \star v = s_nv$  otherwise. From this it follows that $s_n \star v = h_{n,[-n]} (s_nv)$.
\end{proof}

\begin{proposition}
\label{prop:b_cyclehop}
Let $v \in B_n$. For any $i \leq j \leq 2n-1$, we have $$(s_i s_{i+1} \cdots s_j) \star v = h_{i,[i+1,\ldots,j+1]} (s_i s_{i+1} \cdots s_j v).$$
\end{proposition}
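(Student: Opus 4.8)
The plan is to transport the inductive argument behind Proposition~\ref{prop:cyclehop} to the signed setting, carrying out every position computation inside the unfolding, and to induct on $j-i$. The base case $j=i$ asserts $s_i\star v=h_{i,[i+1]}(s_iv)$: for $i<n$ this is the first identity of Lemma~\ref{lem:b_simplehop}, while for $i=n$ the entry $i+1=n+1$ is by convention the value $-n$, so $h_{n,[n+1]}=h_{n,[-n]}$ and it is the second identity of Lemma~\ref{lem:b_simplehop}. For the inductive step I would use that the word $s_is_{i+1}\cdots s_j$ is reduced, so that its Demazure and ordinary products coincide by Lemma~\ref{lemma:moniod_properties}; associativity of $\star$ then lets me peel off the leftmost generator, and combining the base case with the inductive hypothesis gives
$$(s_i\cdots s_j)\star v=s_i\star h_{i+1,[i+2,\ldots,j+1]}(s_{i+1}\cdots s_jv)=h_{i,[i+1]}\big(s_i\,h_{i+1,[i+2,\ldots,j+1]}(s_iw)\big),$$
where $w:=s_is_{i+1}\cdots s_jv$, so that $s_{i+1}\cdots s_jv=s_iw$. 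This reduces the proposition to the single hopping identity
$$h_{i,[i+1]}\big(s_i\,h_{i+1,[i+2,\ldots,j+1]}(s_iw)\big)=h_{i,[i+1,\ldots,j+1]}(w),$$
which, as in type $A$, I would establish for an arbitrary $w\in B_n$.

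To prove the identity I would compare, inside the unfolding, the positions visited by the value $i$ as it hops under $h_{i,[i+1,\ldots,j+1]}$ with those visited by the value $i+1$ as it hops under $h_{i+1,[i+2,\ldots,j+1]}$ applied to $s_iw$. The governing fact is that $s_i$ (for $i<n$) merely interchanges the values $i\leftrightarrow i+1$ and $-i\leftrightarrow-(i+1)$ and fixes all other values, so in $s_iw$ the value $i+1$ occupies exactly the cell that held $i$ in $w$. Splitting according to whether $i+1$ is ever chosen for a swap during the hopping of $i$, I expect the same dichotomy as in type $A$: if $i+1$ is never chosen, the two processes perform identical swaps, so $s_i\,h_{i+1,[i+2,\ldots,j+1]}(s_iw)=h_{i,[i+1,\ldots,j+1]}(w)$, and at termination $i+1$ must sit to the left of $i$ (otherwise a hop of $i$ would remain available, since $i+1\in[i+1,\ldots,j+1]$ and $i+1>i$), so the outer $h_{i,[i+1]}$ acts trivially; and if $i+1$ is chosen, it can only be the final swap, the hopping of $i+1$ in $s_iw$ reproduces all earlier swaps, and the outer $h_{i,[i+1]}$ restores exactly the omitted transposition.

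The genuinely new difficulty, with no type $A$ analogue, is that the list $[i+1,\ldots,j+1]$ may cross the midpoint $n$ (where $n+1=-n$, $n+2=-(n-1)$, and so on), so its largest entries are negative values and a single elementary hop of $i$ is really the double swap $i\leftrightarrow q$, $-i\leftrightarrow-q$, degenerating to the lone self-mirror swap $i\leftrightarrow-i$ exactly when $q=-i$. The delicate point is that the mirror values $-i$ and $-(i+1)$, which are precisely the values that $s_i$ moves, can themselves lie in the relevant lists: one checks that $-i\in[i+1,\ldots,j+1]$ exactly when $j\geq 2n-i$ and $-(i+1)\in[i+2,\ldots,j+1]$ exactly when $j\geq 2n-i-1$, so near the top of the range a double swap in the hopping of $i$ can correspond to a self-mirror swap in the hopping of $i+1$. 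I expect this to be the main obstacle. I would resolve it by recording that every intermediate array remains a genuine unfolding, antisymmetric under the reflection $p\mapsto 2n+1-p$ combined with a sign change, so that the trajectory of $-i$ is forced to mirror that of $i$ and the full configuration is determined by the first-half motion of $i$; the two-case analysis then survives once it is enlarged with the boundary subcases in which $i$ or $i+1$ reaches its own mirror, each such self-mirror swap being the single legitimate step sanctioned by Definition~\ref{def:hopping_typeB}.
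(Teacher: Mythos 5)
Your inductive setup is sound and is essentially the reduction the paper itself performs for the case $i\le n\le j$ (peel off $s_i$ by associativity and Lemma~\ref{lem:b_simplehop}, then merge hopping operators); the paper differs only in that it handles $i\le j<n$ and $n<i\le j$ by folding/unfolding from Proposition~\ref{prop:cyclehop} and runs its induction on $n-i$. The genuine problem is that the hopping identity your induction requires,
$$h_{i,[i+1]}\bigl(s_i\,h_{i+1,[i+2,\ldots,j+1]}(s_iw)\bigr)=h_{i,[i+1,\ldots,j+1]}(w),$$
is false once the string crosses the midpoint --- precisely in the regime you flag as the delicate point and then propose to handle by symmetry bookkeeping. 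Take $n=2$, $i=1$, $j=2$, $w=e$ (so $v=s_2s_1$). The left-hand side is the true Demazure product: $h_{2,[3]}(s_1w)=h_{2,[-2]}([2,1,-1,-2])=[-2,1]$ (a single self-mirror swap), hence the left side is $h_{1,[2]}(s_1\cdot[-2,1])=h_{1,[2]}([-1,2])=[-1,2]$, which agrees with $(s_1s_2)\star(s_2s_1)=s_1\star s_2\star s_1=[-1,2]$. The right-hand side is $h_{1,[2,3]}(e)=h_{1,[2,-2]}([1,2,-2,-1])$: the highest-priority entry $-2$ lies to the right of $1$, so the hops are $1\leftrightarrow-2$ (with mirror $-1\leftrightarrow2$) and then $1\leftrightarrow2$, giving $[-1,-2]\ne[-1,2]$. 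So no enlargement of the type-$A$ dichotomy can close your inductive step: the identity (and, under this literal reading of the lists, the proposition itself) is wrong. Concretely, your Case-2 claim that ``the hopping of $i+1$ in $s_iw$ reproduces all earlier swaps'' is what breaks: here the hop of $2$ in $s_1w$ is the self-mirror swap $2\leftrightarrow-2$, which under relabelling is $1\leftrightarrow-1$, not the swap $1\leftrightarrow-2$ that the list $[2,-2]$ dictates.

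The structural reason is that when the value $i$ crosses the midpoint it exchanges with its own mirror $-i$, never with $-n$; the values $i$ actually sweeps through are $i+1,\ldots,n,-i,-n,\ldots,-(i+1),-(i-1),\ldots$, which differs from the literal $[i+1,\ldots,j+1]=[i+1,\ldots,n,-n,-(n-1),\ldots]$ both in content and in order, and $h_{t,L}$ is sensitive to the order of $L$. Equivalently, the correspondence between the hop of $i$ in $w$ and the hop of $i+1$ in $s_iw$ is conjugation by $s_i$, which replaces $M=[i+2,\ldots,j+1]$ by $s_i(M)$, and $s_i(M)\neq M$ as an ordered list exactly in your flagged range (above, $M=[-2]$ while $s_1(M)=[-1]$). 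A correct proof must carry this reordered list through the induction, i.e.\ prove the statement with the ``values actually encountered'' list; that list coincides with $[i+1,\ldots,j+1]$ only when $j<n$ or $i\ge n$. (You are in good company: the paper's own proof has the same defect, invoking Lemma~\ref{lem:b_transhop} to assert $s_ih_{i+1,[i+2,\ldots,2n]}s_i=h_{i,[i+2,\ldots,2n]}$, which silently identifies $s_i(L)$ with $L$.) A secondary, smaller gap: your base cases cover only $i\le n$, but your induction on $j-i$ also bottoms out at pairs $(j,j)$ with $j>n$, which the paper obtains from the mirrored type-$A$ reduction and which you would need to supply.
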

\begin{proof}
First, if $i\leq j < n$, we focus on how $s_i's_{i+1}'\dots s_j'$ interacts with $v$ since how  $s_{-i}'s_{-(i+1)}'\dots s_{-j}'$ interact with $v$ will mirror that. Then the proposition follows from Proposition~\ref{prop:cyclehop} and Lemma~\ref{lem:b_unfold}. Second, if $n<i\leq j$, we now focus on how $s_{-i}'s_{-(i+1)}'\dots s_{-j}'$ interacts with $v$ since how $s_i's_{i+1}'\dots s_j'$ interact with $v$ will mirror that.  Note that $-i < -(i+1) < \dots < -j$ is an increasing order and hence Proposition~\ref{prop:cyclehop} and Lemma~\ref{lem:b_unfold} imply $$(s_i s_{i-1} \dots s_j) \star v = h_{-(i+1),[-i,\ldots,-j]} (s_i s_{i-1} \dots s_j v).$$ This proves the proposition when $i > n$.

\smallskip

Now suppose that $i \leq n\leq j$.  We proceed by induction on $n-i$.  First, when $n-i = -1$, the proposition follows from the above case. Now suppose that $n-i\geq 0$ and suppose for the sake of induction that we have the proposition is true for all $i'$ such that $n-i'<n-i$.   We start by analyzing the expression $s_i \star(( s_{i+1} \cdots s_j) \star v)$. From the induction hypothesis, we have that
$$(s_{i+1} \cdots s_j) \star v = h_{i+1,[i+2,\ldots,n+j]} (s_{i+1} \cdots s_{n+j} v).$$
By Lemma~\ref{lem:b_simplehop}, it suffices to analyze the operator $h_{i,[i+1]} s_i h_{i+1,[i+2,\ldots,2n]}$.  Then Lemma~\ref{lem:b_transhop} implies $$s_i h_{i+1,[i+2,\ldots,2n]} s_i = h_{i,[i+2,\ldots,2n]}$$ and hence $h_{i,[i+1]}h_{i,[i+2,\ldots,2n]} = h_{i,[i+1,\ldots,2n]}$.  This proves the proposition.


\end{proof}

We now give an analogue of Definition \ref{def:typeA_left_subword} for signed permutations.  For any $w\in B_n$ and $i>0$, define $w \nwarrow i$ to be the subword of $\unfold(w)$ obtained be restricting to numbers to the left of $i$ that are either greater than $i$, or less or equal to $-i$. For example, if $w = [-5,3,1,-2,4]$, then
$$\unfold(w)=[-5,3,1,-2,4,-4, 2, -1, -3, 5].$$ In this case we have $w\nwarrow 1 = [-5,3]$ and $w \nwarrow 2 = [-5,3,-2,4,-4]$.

\begin{theorem}\label{thm:b_main}
For any $w,v\in B_n$, we have $w \star v = h_{n-1,w_{\nwarrow n-1}} h_{2,w_{\nwarrow 2}} h_{1,w_{\nwarrow 1}}(wv)$
\end{theorem}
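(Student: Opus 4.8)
The plan is to transcribe the proof of Theorem~\ref{thm:main} into the signed setting, replacing the ingredients of type $A$ by the type $B$ analogues already in hand: Proposition~\ref{prop:b_cyclehop} in place of Proposition~\ref{prop:cyclehop}, and Corollary~\ref{cor:b_shcomm} in place of Corollary~\ref{cor:shcomm}. The first step is to record a type $B$ analogue of the inversion factorization \eqref{eqn:type_A_stringcomposition}: every $w\in B_n$ factors as
$$w=\cs^B_{n,b_n}\cs^B_{n-1,b_{n-1}}\cdots\cs^B_{1,b_1},$$
where $b_i$ measures the displacement of $i$ in $\unfold(w)$ (so $0\le b_i\le 2(n-i)+1$, and $b_n\in\{0,1\}$ records the sign of $n$). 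This factorization is reduced, so by Lemma~\ref{lemma:moniod_properties}(1) the $\star$-product of its factors equals their ordinary product $w$. The structural heart of the matter, exactly parallel to type $A$, is that the partial product $\cs^B_{n,b_n}\cdots\cs^B_{i,b_i}$ of the factors of index at least $i$ has the same unfolded arrangement of the entries that are $>i$ or $\le -i$ as $w$ does; the reason is that the remaining factors $\cs^B_{i-1,b_{i-1}}\cdots\cs^B_{1,b_1}$ cyclically move only entries of smaller absolute value and hence leave the relative positions and signs of the larger entries untouched.

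Granting this, the second step applies Proposition~\ref{prop:b_cyclehop} to each factor of the reduced product $w\star v=\cs^B_{n,b_n}\star\cdots\star\cs^B_{1,b_1}\star v$, peeling off one hopping operator and one ordinary shift at a time to obtain
$$w\star v=\cdots h_{i,[i+1,\ldots,i+b_i]}\,\cs^B_{i,b_i}\cdots h_{1,[2,\ldots,1+b_1]}\,\cs^B_{1,b_1}\,v,$$
where every index is read modulo the identification $n+j=-(n+1-j)$. The third step slides each ordinary shift to the right past the hopping operators of smaller index by means of Corollary~\ref{cor:b_shcomm}, whose hypothesis $t\le a$ is met because a shift of index $a$ only ever meets hopping operators of index $t\le a$. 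Collecting all the shifts at the right reconstitutes the ordinary product $wv$, while the list attached to $h_i$ is carried to its image $\cs^B_{n,b_n}\cdots\cs^B_{i,b_i}\bigl([i+1,\ldots,i+b_i]\bigr)$. By the structural observation of the first step, and after discarding entries of absolute value less than $i$ (which a hopping operator of index $i$ never touches), this image is exactly the subword $w_{\nwarrow i}$. This produces $w\star v=h_{n,w_{\nwarrow n}}\cdots h_{1,w_{\nwarrow 1}}(wv)$, one hopping operator for each simple generator $s_1,\ldots,s_n$, which is the asserted identity.

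The main obstacle is the first step, specifically the ``restricted arrangement is preserved'' property in the unfolded picture. In type $A$ this is transparent because a shift $\cs_{a,b}$ disturbs only the consecutive block of values $a,\ldots,a+b$; in type $B$ the shift $\cs^B_{i,b_i}$ may wrap through the sign generator $s_n$ (recall $s_{n+j}=s_{n-j}$) and, through the unfolding, act simultaneously on a value and its negative, so one must verify carefully that permuting the entries of smaller absolute value preserves both the order and the signs of all entries that are $>i$ or $\le -i$. Once this bookkeeping is in place, the rest is a formal copy of the type $A$ argument. As an independent check --- and a possible alternative proof --- one can instead unfold via Lemma~\ref{lem:b_unfold}, apply Theorem~\ref{thm:main} to $\unfold(w)\star\unfold(v)$ in $S_{2n}$, and fold the resulting $2n-1$ type $A$ hopping operators in mirrored pairs $h^A_{i},h^A_{-i}$ into the $n$ type $B$ operators $h^B_i$; this route explains directly why the very same subword rule $w_{\nwarrow i}$ governs the signed case.
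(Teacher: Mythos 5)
Your proposal is correct and follows essentially the same route as the paper: the paper's own (very terse) proof likewise decomposes $w$ as $\cs^B_{n,j_n}\cdots\cs^B_{1,j_1}$, applies Proposition~\ref{prop:b_cyclehop} factor by factor, and commutes the shifts past the hopping operators via Corollary~\ref{cor:b_shcomm}, exactly as you do --- your write-up simply fills in the bookkeeping (the preservation of the restricted unfolded arrangement) that the paper leaves implicit. Your count of $n$ hopping operators $h_{n},\ldots,h_1$ also matches the paper's worked example in $B_5$, where the stated $h_{n-1}$ in the theorem appears to be a typo inherited from the type $A$ statement.
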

\begin{proof}
The signed permutation $w$ has a unique decomposition
$$w=\cs^B_{n,j_n}\cdots \cs^B_{2,j_2} \cs^B_{1,j_{1}}$$
for some ``inversion" sequence $(j_1,\ldots,j_n)$.
The theorem now follows an analogue of the proof of Theorem~\ref{thm:main} where we use Proposition~\ref{prop:b_cyclehop} and Corollary~\ref{cor:b_shcomm} instead of Proposition~\ref{prop:cyclehop} and Corollary~\ref{cor:shcomm}.
\end{proof}

The decomposition of signed permutations as a product of $\cs^B_{a,b}$'s stated in the proof above is well known, but due to a lack of reference, we give a brief construction.
The interpretation of $\cs^B_{a,b}$ is that it adjusts the position and sign of the number $a$ in the unfolding of a given signed permutation. Specifically, let $w\in B_n$ and starting from the identity, we use $\cs^B_{1,j_1}$ to move 1 to it's position within $\unfold(w)$.  Note that as 1 move, we also move $-1$ accordingly to stay in $B_n$.  Here, the number $j_1$ denotes the number of positions 1 needs to move.  We then use $\cs^B_{2,j_2}$ to move 2 (and $-2$) to it's position within $\unfold(w)$.  Repeating this process with all numbers $1,2,\ldots, n$ constructs the signed permutation $w$. For example let $w = [-5,3,1,-2,4]$ with $[-5,3,1,-2,4,-4,2,-1,-3,5]$ as its unfolding. The decomposition we get is $$w = \cs^B_{5,9} \cs^B_{4,4} \cs^B_{3,1} \cs^B_{2,6}\cs^B_{1,2}.$$

Next, we give an example of computing the Demazure product on signed permutations using Theorem \ref{thm:b_main}.  Let $w=[-5,3,1,-2,4]$ and $v=[-4,2,-1,-3,5]$ in $B_5$.  We compute $w\star v.$  First note the usual product $wv=[2, 3, 5, -1, 4].$  We apply the sequence of hopping operators
$$h_{5,[-5]}h_{4,[-5]}h_{3,[-5]}h_{2,[-5,3,-2,4,-4]}h_{1,[-5,3]}$$
to $wv$ giving:

\begin{align*}
& [2,3,5,-1,4,-4,1,-5,-3,-2] \xrightarrow[h_{1,[-5,3]}]{} [2,3,-1,5,4,-4,-5,1,-3,-2]  \xrightarrow[h_{2,[-5,3,-2,4,-4]}]{} \\
& [-2,3,-1,5,-4,4,-5,1,-3,2] \xrightarrow[h_{3,[-5]}]{} [-2,-5,-1,-3,-4,4,3,1,5,2]  \xrightarrow[h_{4,[-5]}]{} \\
& [-2,-5,-1,-3,-4,4,3,1,5,2] \xrightarrow[h_{5,[-5]}]{} [-2,-5,-1,-3,-4,4,3,1,5,2]
\end{align*}
Theorem \ref{thm:b_main} implies that the Demazure product $w\star v=[-2, -5, -1, -3, -4]$.
We conclude this section with some questions.


\begin{question}
Can the Demazure product in type $D$ be similarly described as the type $B$ case?
\end{question}

\begin{question}
In \cite{BW}, Billey and Weaver give a ``one-line notation" algorithm to compute the maximal element in the intersection of a lower interval with an arbitrary coset of a maximal parabolic subgroup in type $A$.  In \cite{OR}, there is an alternate algorithm to compute the maximal element using the Demazure product (this second formula is for any Coxeter group and parabolic subgroup).  Is there a way to apply Theorem \ref{thm:main} to recover the algorithm in \cite{BW}?  If so, is there a generalization of the algorithm in \cite{BW} to the case where the parabolic subgroup is not maximal? or to the case of signed permutations?  We remark that the existence of such a maximal element for any Coxeter group $W$ and parabolic subgroup $W_J$ was established in \cite{Mazorchuk-Mrden20}.
\end{question}

\subsection*{Acknowledgments}
This research was primarily conducted during the 2022 Honors Summer Math Camp at Texas State University. The authors gratefully acknowledge the support from the camp and also thank Texas State University for providing support and a great working environment.  ER was supported by a grant from the Simons Foundation 941273.

\bibliography{bruhat}
\bibliographystyle{siam}

\end{document}